 \makeatletter \@addtoreset{equation}{section}
\newtheorem{theorem}{Theorem}
\newtheorem{proposition}{Proposition}
\newtheorem{lemma}{Lemma}
\newtheorem{remark}{Remark}
\newcommand{\R}{\mathbb{R}}
\newcommand{\C}{\mathbb{C}}
\newcommand{\N}{\mathbb{N}}
\begin{document}
\title[Elliptic operators with drift]{Elliptic operators with unbounded diffusion, drift and potential terms}
\author{S.E. Boutiah}
\address{Department of Mathematics, University Ferhat Abbas Setif-1, Setif 19000, Algeria.}
\email{sallah\_eddine.boutiah@yahoo.fr}
\author{F. Gregorio}
\address{FernUniversit\"at in Hagen
Fakult\"at Mathematik und Informatik
Lehrgebiet Analysis, 58084 Hagen, Germany}
\email{fgregorio@unisa.it}
\author{A. Rhandi}
\address{Dipartimento di Ingegneria dell'Informazione, Ingegneria Elettrica e Matematica Applicata, Universit\`a degli
Studi di Salerno, Via Giovanni Paolo II, 132, I 84084 FISCIANO (Sa), Italy.}
\email{arhandi@unisa.it}
\author{C. Tacelli}
\address{Dipartimento di Ingegneria dell'Informazione, Ingegneria Elettrica e Matematica Applicata, Universit\`a degli
Studi di Salerno, Via Giovanni Paolo II, 132, I 84084 FISCIANO (Sa), Italy.}
\email{ctacelli@unisa.it}
\subjclass[2000]{47D07, 47D08; 35J10, 35K20}

\maketitle

\begin{abstract}
We prove that the realization $A_p$ in $L^p(\R^N),\,1<p<\infty$, of the elliptic operator
$A=(1+|x|^{\alpha})\Delta+b|x|^{\alpha-1}\frac{x}{|x|}\cdot \nabla-c|x|^{\beta}$ with domain
$D(A_p) =\{ u \in W^{2,p}(\mathbb{R}^N)\, |\,  Au \in L^p(\mathbb{R}^N)\}$ generates a strongly continuous analytic semigroup $T(\cdot)$ provided that
$\alpha >2,\,\beta >\alpha -2$ and any constants $b\in \R$ and $c>0$. This generalizes the recent results in
\cite{AC-AR-CT} and in \cite{Me-Sp-Ta}. Moreover we show that $T(\cdot)$ is consistent, immediately compact and ultracontractive.
\end{abstract}


\section{Introduction}
Starting from the 1950's, the theory of linear second order elliptic operators with bounded coefficients has widely been studied.
In recent years there has been a surge of activity focused on the case of unbounded coefficients. Let us recall some recent results concerning elliptic operators having polynomial coefficients.

In (\cite{G-S}, \cite{G-S 2}) it is proved that, under suitable assumptions on $\alpha$, the operator $(s+ \vert x \vert^{\alpha})\Delta$, $s=0,1$, generates an analytic semigroup in $L^{p}(\mathbb{R}^{N})$. In \cite{Luca - Abde} (resp. \cite{AC-AR-CT}) the generation of an analytic semigroup of the $L^p$-realization of the Schr\"odinger-type operators $(1+ \vert x \vert^{\alpha})\Delta- \vert x \vert^{\beta}$ in $L^{p}(\mathbb{R}^{N})$ for $\alpha \in [0,2)$ and $\beta>2$ (resp. $\alpha>2$, $\beta>\alpha-2$) is obtained. In  \cite{Luca - Abde} some estimates for the associated heat kernel are provided.

 Under suitable growth assumptions on the functions $q$ and $F$, it is proved in \cite{F-L} that the operator $\mathcal{A}u = q \Delta u + F \cdot \nabla u$ admits realizations generating analytic semigroups in $L^{p}(\mathbb{R}^{N})$ for any $p \in [1,+\infty]$ and in $C_{b}(\mathbb{R}^{N})$.

 Concerning the operator $( 1+ \vert x \vert^{\alpha})\Delta + b\vert x \vert^{\alpha-2}x\cdot \nabla $, G. Metafune et al. in \cite{Me-Sp-Ta} showed the generation of an analytic semigroup in $L^{p}(\mathbb{R}^{N})$ in the case where
 $\alpha>2$, $\frac{N}{N-2+b}<p<\infty$, and the semigroup is contractive if and only if $p\ge \frac{N+\alpha-2}{N-2+b}$.  Domain characterization and spectral properties as well as kernel estimates have been also proved. More recently in \cite{GM-NO-MS-CP} the authors showed that the operator $L = \vert x \vert^{\alpha}\Delta + b\vert x \vert^{\alpha-2}x\cdot \nabla - c\vert x \vert^{\alpha-2}$ generates a strongly continuous semigroup in $L^p(\mathbb{R}^N)$ if and only if $s_1+\min\{0,2-\alpha\}<\frac{N}{p}<s_2+\max\{0,2-\alpha\}$, where $s_i$ are the roots of the equation $c+s(N-2+b-s)=0$. Moreover the domain of the generator is also characterized.
 \\
 At this point it is important to note that the techniques used in \cite{GM-NO-MS-CP} are completely different from our and lead to results which are not comparable with our case ($\beta >\alpha -2$).

 In this paper, we are  interested in studying quantitative and qualitative properties in $L^p(\R^N),\,1<p<\infty$, of the elliptic operator
\begin{equation}\label{eq:operator-A}
 Au(x)=q(x)\Delta u(x)+b|x|^{\alpha-1}\frac{x}{|x|}\cdot\nabla u-V(x)u(x), \quad x\in\mathbb{R}^N,
\end{equation}
where $\alpha>2,\,\beta>\alpha-2$, $q(x)=(1+|x|^{\alpha})$ and $V(x)=c|x|^\beta$, with $b\in \R$ and $c>0$.

Let $A_p$ be the realization of $A$ in $L^p(\mathbb{R}^N)$ endowed with the maximal domain
\begin{equation}
D_{p,max}(A)=\{ u\in L^p(\mathbb{R}^N) \cap W^{2,p}_{loc}(\mathbb{R}^N)\,:\;Au\in L^p(\mathbb{R}^N) \}.
\end{equation}
After proving a priori estimates, we deduce that the maximal domain $D_{p,max}(A)$ of the operator $A$  coincides with
\begin{eqnarray*}
D_p(A):=\{ u\in W^{2,p}(\mathbb{R}^N)\;:\; Vu,\ (1+|x|^{\alpha-1})\nabla u,\ (1+|x|^{\alpha}) D^2u\in L^p(\mathbb{R}^N)\}.
\end{eqnarray*}
So, we show in the main result of this paper that, for any $1<p<\infty$, the realization $A_p$ of $A$ in $L^p(\mathbb{R}^N)$, with
domain $D_p(A)$
generates a positive strongly continuous and analytic semigroup $(T_p(t))_{t\ge 0}$ for $p\in (1,\infty)$.
This semigroup is also consistent, irreducible, immediately compact and ultracontractive.

The paper is divided as follows. In section \ref{section2} we recall the solvability of the elliptic and parabolic problems in spaces of continuous functions. In Section \ref{section3} we introduce the definition of the reverse H\"older class and recall some results given in \cite{shen-1995} and in \cite{AC-AR-CT} to study the solvability of the elliptic problem in $L^p(\R^N)$. In section \ref{section4} we prove that the maximal domain of the operator $A$ coincides with the weighted Sobolev space $D_p(A)$, and we state and prove the main result of this paper.\\

{\large \bf Notation.}
In general we use standard notations for function spaces. We denote by $L^p(\mathbb{R}^N)$ and $W^{2,p}(\mathbb{R}^N)$ the standard $L^p$ and Sobolev spaces, respectively. Following the notation in \cite{AC-AR-CT}, for any $k\in\N\cup\{\infty\}$ we denote by $C^{k}_c(\mathbb{R}^N)$ the set of all
functions $f: \mathbb{R}^N \to\mathbb{R}$ that are continuously differentiable in $\mathbb{R}^N$ up to $k$-th order and have compact support (say ${\rm supp}(f)$). The space $C_b(\mathbb{R}^N)$ is the set of all bounded and continuous functions $f:\mathbb{R}^N\to
\mathbb{R}$, and we denote by $\|f\|_{\infty}$ its sup-norm, i.e., $\|f\|_{\infty}=\sup_{x\in\mathbb{R}^N}|f(x)|$. We use also the space
$C_0(\mathbb{R}^N):=\{f\in C_b(\mathbb{R}^N): \lim_{|x|\to \infty}f(x)=0\}.$
If $f$ is smooth enough we set
\begin{eqnarray*}
|\nabla f(x)|^2=\sum_{i=1}^N|D_if(x)|^2,\qquad
|D^2f(x)|^2=\sum_{i,j=1}^N|D_{ij}f(x)|^2.
\end{eqnarray*}
For any $x_0\in\mathbb{R}^N$ and any $r>0$ we denote by $B(x_0,r)\subset\mathbb{R}^N$ the open ball, centered at $x_0$ with radius $r$.
We simply write $B(r)$ when $x_0=0$.
The function $\chi_E$ denotes the characteristic function of the set $E$,
i.e., $\chi_E(x)=1$ if $x\in E$, $\chi_E(x)=0$ otherwise.
Finally, by $x\cdot y$ we denote the Euclidean scalar product of the vectors $x,y\in\mathbb{R}^N$.
\section{Solvability in $C_{0}(\mathbb{R}^{N})$}\label{section2}
In this short section we briefly recall some
properties of the elliptic and parabolic problems associated with $A$ in spaces of continuous functions.

Let us first consider the operator $A$ on $C_b(\R^N)$ with its maximal domain
$$ D_{max}(A)= \{ u \in C_{b}(\mathbb{R}^N)\cap W_{loc}^{2,p}(\mathbb{R}^N) \quad\text{for all}\quad 1\le p<\infty : Au \in C_{b}(\mathbb{R}^N) \}.$$

It is known, cf. \cite[Chapter 2, Section 2]{Lo-Be}, that to the associated parabolic problem
\begin{equation}     \label{problem}
\left\{
\begin{array}{ll}
u_t(t,x)=Au(t,x)& x\in\R^N,\ t>0, \\
u(0,x)=f(x)   &x\in\R^N\;, \end{array}\right.
\end{equation}
where $f\in C_b(\R^N)$, one can associate a semigroup $(T(t))_{t\ge 0}$ of bounded operator
in $C_b(\R^N)$ such that $u(t,x)=T(t)f(x)$ is a solution of \eqref{problem} in the following sense:
$$u\in C([0,+\infty)\times \R^N)\cap C_{loc}^{1+\frac{\sigma}{2},2+\sigma}((0,+\infty)\times \R^N)$$
and $u$ solves \eqref{problem} for any $f\in C_b(\R^N)$ and some $\sigma \in (0,1)$.
Moreover, in our case the solution is unique. This can be seen by proving the existence of a Lypunov function for $A$, i.e.,
 a positive function  $\varphi(x)\in C^2(\R^N)$ such that $\lim_{|x|\to \infty}\varphi(x)=+\infty$
and $A\varphi-\lambda \varphi\leq 0$ for some $\lambda>0$.
\begin{proposition}
Assume that $\alpha>2$ and $\beta>\alpha-2$. Let $\psi = 1 + \vert x\vert^{\gamma}$ where $\gamma>2$ then there exists a constant $C>0$ such that
$$ A\psi \le C\psi .$$
\end{proposition}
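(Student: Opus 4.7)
The plan is to compute $A\psi$ explicitly and then compare the resulting polynomial to $\psi = 1+|x|^\gamma$ term by term.

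First I would write $r = |x|$ and compute the elementary derivatives of the radial function $\psi$: namely $\nabla\psi = \gamma r^{\gamma-2}x$, so that $\tfrac{x}{|x|}\cdot\nabla\psi = \gamma r^{\gamma-1}$, and $\Delta\psi = \gamma(\gamma+N-2)\,r^{\gamma-2}$. Substituting into the definition of $A$ in \eqref{eq:operator-A} gives
\begin{equation*}
A\psi(x) = \gamma(\gamma+N-2)\,r^{\gamma-2} + \bigl[\gamma(\gamma+N-2)+b\gamma\bigr]\,r^{\gamma+\alpha-2} - c\,r^{\beta} - c\,r^{\gamma+\beta}.
\end{equation*}

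The second step is the key asymptotic observation. The only potentially troublesome positive term is the one of order $r^{\gamma+\alpha-2}$, and I want to absorb it into the negative potential contribution $-c\,r^{\gamma+\beta}$. This is exactly where the hypothesis $\beta>\alpha-2$ enters: it yields $\gamma+\beta > \gamma+\alpha-2$, so there exists $R>0$ such that for all $r\geq R$ one has
\begin{equation*}
\bigl[\gamma(\gamma+N-2)+b\gamma\bigr]\,r^{\gamma+\alpha-2} - c\,r^{\gamma+\beta} \leq 0.
\end{equation*}
Thus for $|x|\geq R$, $A\psi(x) \leq \gamma(\gamma+N-2)\,r^{\gamma-2}$, and since $\gamma>2$ a case split on $r\gtrless 1$ shows $r^{\gamma-2}\leq 1+r^\gamma = \psi(x)$. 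So on the exterior region I get $A\psi \leq \gamma(\gamma+N-2)\,\psi$ directly.

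Finally, on the compact ball $\{|x|\leq R\}$ the function $A\psi$ is continuous and hence bounded above by some constant $M$, while $\psi\geq 1$ everywhere; this produces $A\psi\leq M\psi$ on that region. Combining the two zones yields the claim with $C := \max\{M,\gamma(\gamma+N-2)\}$. There is no real obstacle here — the entire argument is a direct calculation, and the only point requiring attention is checking that the assumption $\beta>\alpha-2$ is used precisely to dominate the drift/diffusion term by the potential at infinity; no use of $b\in\R$ being positive or negative is needed, since the potentially large positive term is killed by $-c\,r^{\gamma+\beta}$ regardless of the sign of $b$.
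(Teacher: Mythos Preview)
Your proof is correct and follows essentially the same approach as the paper: both compute $A\psi$ explicitly, then use the hypothesis $\beta>\alpha-2$ so that the negative potential term of order $r^{\gamma+\beta}$ dominates the positive diffusion/drift term of order $r^{\gamma+\alpha-2}$ at infinity, with the remaining lower-order contribution controlled by $C\psi$. The only cosmetic difference is that the paper keeps the factors grouped as $(1+|x|^\alpha)|x|^{\gamma-2}$ and absorbs everything into a single global inequality, whereas you expand into monomials and split explicitly into the regions $|x|\ge R$ and $|x|\le R$; the underlying argument is the same.
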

\begin{proof}
An easy computation gives
\begin{align*}
 A\psi & = \gamma(N+\gamma-2)(1+\vert x\vert^{\alpha})\vert x \vert^{\gamma-2} + b\gamma\vert x\vert^{\alpha-1}\vert x\vert^{\gamma-2} - c(1+\vert x\vert^{\gamma})\vert x\vert^{\beta} \\
&  \le \{ \gamma(N+\gamma-2) +  \vert b \vert \gamma \}(1+\vert x\vert^{\alpha})\vert x \vert^{\gamma-2}  - c(1+\vert x\vert^{\gamma})\vert x\vert^{\beta}.
\end{align*}
Since $\beta>\alpha-2$, it follows that there exists $C>0$ such that
$$ \{ \gamma(N+\gamma-2) +  \vert b \vert \gamma \}(1+\vert x\vert^{\alpha})\vert x \vert^{\gamma-2} \le c(1 + \vert x\vert^{\gamma}) \vert x\vert^{\beta} + C(1 + \vert x\vert^{\gamma}).$$
Thus, $\psi$ is a Lyapunov function for $A$.
\end{proof}

As in \cite{AC-AR-CT} one can prove the following result.
\begin{proposition}
Assume that $N>2,\,\alpha>2$ and $\beta >\alpha -2$. Then the semigroup $(T(t))$ is generated by $ (A, D_{max}(A))\cap C_{0}(\mathbb{R}^{N})$ and maps $C_{0}(\mathbb{R}^{N})$ into $C_{0}(\mathbb{R}^{N})$.
\end{proposition}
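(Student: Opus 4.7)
The existence of a semigroup $(T(t))$ on $C_b(\mathbb{R}^N)$ solving \eqref{problem} uniquely is already provided by \cite[Chapter 2]{Lo-Be} together with the Lyapunov function constructed in the previous proposition. The heart of the statement is the $C_0$--invariance of $T(t)$; once this is established, strong continuity on $C_0(\mathbb{R}^N)$ and the identification of the generator with $(A,D_{max}(A)\cap C_0(\mathbb{R}^N))$ follow by standard restriction/comparison-of-resolvents arguments, as in \cite{AC-AR-CT}.

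Following that scheme, I would approximate $(T(t))$ by the Dirichlet semigroups $(T_n(t))$ of $A$ on the balls $B_n$ (extended by zero outside). Interior Schauder estimates together with the uniqueness guaranteed by the Lyapunov function imply $T_n(t)f \to T(t)f$ locally uniformly for every $f \in C_b(\mathbb{R}^N)$. To conclude invariance it then suffices to control the tail of $T_n(t)f$, uniformly in $n$, for $f$ in a dense subset of $C_0(\mathbb{R}^N)$.

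The tail estimate I would extract from a stationary super-solution. For $f \in C_c(\mathbb{R}^N)$ with $\mathrm{supp}\, f \subset B_R$, the aim is to find a positive $w \in C^2(\mathbb{R}^N \setminus \{0\})$ with $w(x)\to 0$ at infinity and $Aw \leq 0$ on $\{|x| > R\}$ for $R$ large enough. A natural ansatz is $w(x) = |x|^{-\gamma}$ with $\gamma \in (0, N-2)$ (nonempty thanks to $N>2$), for which a direct calculation gives
\begin{equation*}
Aw(x) = -\gamma(N - 2 - \gamma)|x|^{-\gamma - 2} - \gamma(N - 2 - \gamma + b)|x|^{\alpha - \gamma - 2} - c|x|^{\beta - \gamma}.
\end{equation*}
The first summand is non-positive by the choice of $\gamma$, and since $\beta > \alpha - 2$ the last summand strictly dominates the middle one (regardless of the sign of $b$) for $|x|$ large; hence $Aw \leq 0$ outside a sufficiently large ball $B_R$. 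Choosing $M$ so large that $Mw \geq \|f\|_\infty$ on $\partial B_R$, the parabolic maximum principle on the annular cylinder $(B_n \setminus \overline{B_R})\times(0,\infty)$ yields $|T_n(t)f| \leq Mw$ there, and passing $n\to \infty$ gives $|T(t)f(x)| \leq Mw(x)$ for $|x|\geq R$. Thus $T(t)f \in C_0(\mathbb{R}^N)$, and density of $C_c$ in $C_0$ extends the invariance to the whole of $C_0(\mathbb{R}^N)$.

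The delicate step is clearly the super-solution calculation: one must balance the signs of the three competing terms, and both hypotheses are essential---$N>2$ to have a nontrivial range for $\gamma$, and $\beta>\alpha-2$ so that the potential dominates the drift at infinity regardless of $b$. Every other ingredient---strong continuity at $t=0^+$ on $C_0$ (which follows from local uniform convergence together with the uniform tail bound) and the identification of the generator via the resolvents, exactly as in \cite{AC-AR-CT}---is then essentially routine.
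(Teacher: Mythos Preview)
Your barrier argument is correct, and the computation of $Aw$ for $w(x)=|x|^{-\gamma}$ is right; with $\gamma\in(0,N-2)$ the first term is nonpositive, and since $\beta>\alpha-2$ the potential term eventually absorbs the drift term regardless of the sign of $b$, so $Aw\le 0$ outside a large ball. The comparison on the annular cylinder is legitimate because the Dirichlet semigroups $T_n(t)$ are sub-Markovian (as $A\mathbf{1}=-V\le 0$), giving the needed bound $|T_n(t)f|\le\|f\|_\infty$ on $\partial B_R$.

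Your route, however, is \emph{not} the one the paper takes. The paper splits on the sign of $b-(2-N)$. For $b>2-N$ it invokes \cite[Proposition~2.2]{Me-Sp-Ta}: the semigroup $(S(t))$ generated by the drift-only operator $L_0=q\Delta+b|x|^{\alpha-1}\frac{x}{|x|}\cdot\nabla$ already maps $C_b$ into $C_0$, and a simple comparison $T(t)\le S(t)$ (coming from $V\ge 0$) then forces $T(t)\mathbf{1}\in C_0$, hence $C_0$-invariance and compactness. For $b\le 2-N$ the paper makes a forward reference to the $L^p$ generation result (Theorem~\ref{generation}) and to Theorem~\ref{risolvente}, using $T(t)f_n=T_p(t)f_n\in D_p(A)\subset W^{2,p}\hookrightarrow C_0$ for $p>N/2$ and then passing to the limit in $C_c^\infty$.

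What each approach buys: the paper's Step~1 yields compactness of $(T(t))$ for free in the range $b>2-N$, something your barrier method does not immediately give; on the other hand your argument is uniform in $b$, entirely self-contained within Section~2, and avoids both the external dependence on \cite{Me-Sp-Ta} and the forward reference to the $L^p$ theory of Sections~3--4. For the bare invariance statement your method is arguably cleaner.
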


\begin{proof}
We split the proof into two steps.

{\em Step 1.} Here we assume that $b> 2-N$.

Let $L_{0}$ be the operator defined by $ L_{0} := q(x)\Delta +b \vert x \vert^{\alpha - 1}\frac{x}{\vert x \vert} \cdot \nabla$, and as a result of \cite[Proposition 2.2 (ii)]{Me-Sp-Ta}, we have that the minimal semigroup $(S(t))$ is generated by $(L_{0},D_{max}(L_{0}))\cap C_{0}(\mathbb{R}^{N})$. Moreover the resolvent and the semigroup map $C_{b}(\mathbb{R}^{N})$ into $C_{0}(\mathbb{R}^{N})$ and are compact.\\
Set $ v(t,x) = S(t)f(x)$ and $ u(t,x) = T(t)f(x)$ for $ t > 0, x \in \mathbb{R}^{N}$ and $  0\le f \in C_{b}(\mathbb{R}^{N})$.
Then the function $ w(t,x) = v(t,x) - u(t,x)$ solves
$$
\begin{cases}
&w _{t}(x,t) = L_{0}w(t,x) + V(x)u(t,x),\ \  t>0,\\
&w(0,x) = 0 \qquad x \in \mathbb{R}^{N}.
\end{cases}
$$
So, applying \cite[Theorem 4.1.3]{Lo-Be}, we have $w\ge 0$ and hence $ T(t)\le S(t)$. Thus, $T(t)\mathbf{1} \in C_{0}(\mathbb{R^{N}})$, for any $t>0$ (see \cite[Proposition 2.2 (iii)]{G-S}). Therefore $T(t)$ is compact and $C_{0}(\mathbb{R}^{N})$ is invariant for $T(t)$ (cf. \cite[Theorem 5.1.11]{Lo-Be}).

{\em Step 2.} Assume now that $b\le  2-N$.

Let $f \in C_0(\mathbb{R}^N)$. Since $C_c^\infty(\R^N)$ is dense in $C_0(\R^N)$, there is a sequence $(f_n)\subset C_c^\infty(\R^N)$ such that $\lim_{n\to \infty}\|f_n-f\|_\infty=0$.

On the other hand, it follows from Theorem \ref{generation} that the operator $A_p$ with domain $D_{p,max}(A)$ generates an analytic semigroup $T_p(t)$ in $L^p(\mathbb{R}^N)$, and $D_p(A)$ is continuously embedded into $W^{2,p}(\mathbb{R}^N)$. Hence, by Theorem \ref{risolvente} and Sobolev's embedding theorem, $T(t)f_n=T_p(t)f_n \in D_p(A)\subset W^{2,p}(\mathbb{R}^N)\hookrightarrow C_0(\mathbb{R}^N)$ for $p> \frac{N}{2}$. Since $f_n \rightarrow f$ uniformly, it follows that $T(t)f_n \rightarrow T(t)f$ uniformly. Hence $T(t)f \in C_0(\mathbb{R}^N)$.
\end{proof}
\begin{remark}
As one sees from the proof of the above proposition, in the case where $b>2-N$ the semigroup $(T(t))$ generated by $ (A, D_{max}(A))\cap C_{0}(\mathbb{R}^{N})$ is compact.
\end{remark}

\section{Solvability of $\lambda u-Au=f$ in $L^p(\mathbb{R}^N)$}\label{section3}
In the previous section we have proved the existence and uniqueness of the elliptic and parabolic problems in $C_{0}(\mathbb{R}^{N})$. In this section we study the solvability of the equation $\lambda u-A_pu=f$ for $\lambda>\lambda_0$, where $\lambda_0$ is a suitable positive constant.

Let $f\in L^p(\mathbb{R}^N)$ and consider  the equation
\begin{equation}\label{eq:risolv}
\lambda u-Au=f.
\end{equation}
Let $\phi=(1+|x|^\alpha)^{b/\alpha}$ and set  $u=\frac{v}{\sqrt \phi}$. A simple computation  gives
\begin{align}\label{eq:lambdau-Au}
& \lambda u -Au=\frac{(1+|x|^\alpha)}{\sqrt \phi}\left[-\Delta v+Uv+ \frac{V+\lambda}{1+|x|^\alpha}v\right],
\end{align}
where
\begin{align*}
U=-\frac{1}{4}\left|\frac{\nabla \phi}{\phi}\right|^2+\frac{1}{2}\frac{\Delta \phi}{\phi}.
\end{align*}
Then solving \eqref{eq:risolv}
is equivalent to solve
\begin{equation}\label{eq:risolv-equiv}
-Hv=\frac{\sqrt \phi}{1+|x|^\alpha}f,
\end{equation}
where $H$ is the Schr\"odinger operator defined by
\[
 H=\Delta -U-\frac{V+\lambda}{1+|x|^\alpha}.
\]
If we denote by $G(x,y)$ the Green function of $H$, a solution of \eqref{eq:risolv-equiv} is given by
\[
 v(x)=\int_{\mathbb{R}^N} G(x,y) \frac{\sqrt {\phi(y)}}{1+|y|^\alpha}f(y)dy,
\]
and hence a solution of \eqref{eq:risolv} should be
\begin{equation}\label{eq:def-L}
u(x)=Lf(x):=\frac{1}{\sqrt {\phi (x)}}\int_{\mathbb{R}^N} G(x,y) \frac{\sqrt {\phi(y)}}{1+|y|^\alpha}f(y)dy.
\end{equation}

First, we have to show that $L$ is a bounded operator in $L^p(\mathbb{R}^N)$.
For this purpose we need to estimate $G.$

We focus our attention to the operator $H$. Evaluating the potential $\mathcal {V}=U+\frac{V+\lambda}{1+|x|^\alpha}$, it follows that
\begin{align*}
\mathcal {V} &=\frac{|x|^{2\alpha-2}}{(1+|x|^\alpha)^2}\left(\frac{b^2}{4}-\frac{b\alpha}{2} \right)
  +\frac{|x|^{\alpha-2}}{1+|x|^\alpha}\frac{b}{2}(N+\alpha-2)
  +\frac{c|x|^\beta +\lambda}{1+|x|^{\alpha}}
\\
&=
\left(\frac{1}{1+|x|^\alpha}-\frac{1}{(1+|x|^\alpha)^2}\right)|x|^{\alpha -2}\left(\frac{b^2}{4}-\frac{b\alpha}{2} \right)+\frac{|x|^{\alpha-2}}{1+|x|^\alpha}\frac{b}{2}(N+\alpha-2)
  +\frac{c|x|^\beta +\lambda}{1+|x|^{\alpha}}\\
&=\frac{|x|^{\alpha-2}}{1+|x|^\alpha}
  \left(\frac{b^2}{4}+b\left(\frac{N-2}{2}\right)\right)
  +\frac{|x|^{\alpha-2}}{(1+|x|^\alpha)^2}
  \left(
      -\frac{1}{4}b^2+\frac{1}{2}b\alpha
      \right)
  +\frac{c|x|^\beta}{1+|x|^{\alpha}}
	+\frac{\lambda}{1+|x|^\alpha}.
\end{align*}
We can choose $\lambda_0>0$ such that for every $\lambda\geq \lambda_0$ the potential $\mathcal V$ is positive.
Indeed, since $\beta>\alpha-2$ the function
\[
\frac{|x|^{2\alpha-2}}{(1+|x|^\alpha)}\left(\frac{b^2}{4}-\frac{b\alpha}{2} \right)
  +{|x|^{\alpha-2}}\frac{b}{2}(N+\alpha-2)
  +{c|x|^\beta}
\]
has a nonpositive minimum $\mu$ in $\mathbb{R}^N$. So, one takes $\lambda_0>-\mu.$\\
On the other hand, since $\mathcal V(0)=\lambda>0$ and $\mathcal V$ behaves like  $|x|^{\beta-\alpha}$ as $|x|\to \infty$  we have the following estimates
\begin{align}\label{eq:stime-v}
&C_1(1+|x|^{\beta-\alpha})\leq \mathcal V \leq C_2 ({1+|x|^{\beta-\alpha}})\quad\text{ if }\beta\geq \alpha ,\\
&C_3\frac{1}{1+|x|^{\alpha-\beta}}\leq \mathcal V \leq C_4 \frac{1}{1+|x|^{\alpha-\beta}}\quad\text{ if }\alpha-2<\beta<\alpha \nonumber
\end{align}
for some positive constants $C_1,C_2,C_3,C_4$.

At this point we can use bounds of $G$ obtained by \cite{shen-1995} in the case of positive potentials belonging to the reverse H\"older class $B_q$ for some $q\geq N/2$.

We recall that a nonnegative locally $L^q$-integrable function $V$ on $\mathbb{R}^N$ is said to be in $B_q,\,1 < q < \infty$, if there exists $C > 0$ such that the reverse
H\"older inequality
\[
\left( \frac{1}{|B|}\int_BV^q(x) dx \right) ^{1/q}\leq C\left( \frac{1}{|B|}\int_BV(x) dx \right)
\]
holds for every ball $B$ in $\mathbb{R}^N$. A nonnegative function $V\in L_{\rm loc}^\infty(\mathbb{R}^N)$ is in
$B_\infty $ if
\[
\|V\|_{L^\infty(B)}\leq C\left( \frac{1}{|B|}\int_BV(x) dx \right)
\]
for every ball $B$ in $\mathbb{R}^N$.

As regards the potential $\mathcal V$,
by \eqref{eq:stime-v}  and since $\beta-\alpha>-2$, we have that $\mathcal V\in B_{N/2}$. So, it follows from \cite[Theorem 2.7]{shen-1995} that for any $k\in \N$ there is a constant $C_k>0$ such that
\begin{equation}\label{eq:stima-G-shen}
|G(x,y)|\leq\frac{C_k}{\left( 1+m(x)|x-y| \right)^k}\cdot \frac{1}{|x-y|^{N-2}},\quad x,\,y\in \mathbb{R}^N,
\end{equation}
where the auxiliary function $m$ is defined by
\begin{equation}\label{eq:def-m}
\frac{1}{m(x)}:=\sup_{r>0}\left\{r\,:\,\frac{1}{r^{N-2}} \int_{B(x,r)}\mathcal V(y)dy\leq 1 \right\},\quad x\in \mathbb{R}^N.
\end{equation}

In \cite{AC-AR-CT}  a lower bound for the auxiliary function
associated to the potential $\tilde V=\frac{|x|^\beta}{1+|x|^{\alpha}}$ was obtained.
Since  $\mathcal V\geq C_1\tilde V$ for some positive constant $C_1$, we have $m(x)\geq \widetilde{m}(x)$,
where $\frac{1}{\widetilde{m}(x)}:=\sup_{r>0}\left\{r\,:\,\frac{1}{r^{N-2}} \int_{B(x,r)} C_1\tilde V(y)dy\leq 1 \right\}$.
Replacing $\tilde V$ with $C_1\tilde V$ in \cite[Lemma 3.1, Lemma 3.2]{AC-AR-CT} we obtain
$\widetilde{m}(x)\geq  C_2 \left( 1+|x|\right)^{\frac{\beta-\alpha}{2}}$. So we have
\begin{lemma}\label{pr:mx-beta-le-alpha}
Let $\alpha-2<\beta$. There exists $C=C(\alpha,\beta,N)$ such that
\begin{equation}\label{eq:stima-m-beta-le-alpha}
m(x)\geq  C \left( 1+|x|\right)^{\frac{\beta-\alpha}{2}}.
\end{equation}
\end{lemma}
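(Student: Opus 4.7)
The strategy, sketched by the authors in the paragraph preceding the statement, reduces the lower bound on $m$ to the estimates for the auxiliary function of the simpler potential $\tilde V(y) = |y|^\beta/(1+|y|^\alpha)$ already proved in \cite[Lemmas~3.1--3.2]{AC-AR-CT}. Two ingredients are needed: a monotonicity property of the auxiliary function with respect to the underlying potential, and the invariance of the asymptotic rate when $\tilde V$ is multiplied by a positive constant.

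The first step is the monotonicity. If two nonnegative, locally integrable potentials satisfy $V_1 \leq V_2$ pointwise on $\R^N$, then for every $r > 0$ the inequality $r^{-(N-2)}\int_{B(x,r)} V_2(y)\,dy \leq 1$ implies $r^{-(N-2)}\int_{B(x,r)} V_1(y)\,dy \leq 1$. Hence the set of admissible radii appearing in the definition \eqref{eq:def-m} of $1/m_{V_1}(x)$ contains the corresponding set for $V_2$, so that $m_{V_1}(x)\leq m_{V_2}(x)$. Applying this to the pointwise inequality $\mathcal V \geq C_1 \tilde V$ furnished by \eqref{eq:stime-v} yields $m(x) \geq \widetilde m(x)$, where $\widetilde m$ is the auxiliary function built from $C_1 \tilde V$.

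The second step is to transfer the bound from $\tilde V$ to $C_1\tilde V$. The proofs in \cite[Lemma~3.1, Lemma~3.2]{AC-AR-CT} establish that the auxiliary function of $\tilde V$ itself satisfies $m_{\tilde V}(x)\geq C'(1+|x|)^{(\beta-\alpha)/2}$; the core computation is that for $|x|$ large and $r\leq |x|/2$ one has $\tilde V \asymp |x|^{\beta-\alpha}$ on $B(x,r)$, so $\int_{B(x,r)} \tilde V\,dy \asymp r^N|x|^{\beta-\alpha}$, and matching this with $r^{N-2}$ at the threshold selects the critical radius $r \asymp |x|^{(\alpha-\beta)/2}$, while for $|x|$ bounded the continuity and positivity of $\tilde V$ provide a uniform positive lower bound. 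Replacing $\tilde V$ by $C_1\tilde V$ only shifts the threshold from $1$ to $1/C_1$, which leaves the exponent $(\beta-\alpha)/2$ unchanged and merely rescales the multiplicative constant. Chaining this estimate with the monotonicity bound $m(x) \geq \widetilde m(x)$ produces the claimed inequality $m(x)\geq C(1+|x|)^{(\beta-\alpha)/2}$.

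The main (and really only) technical point is to verify that the computations of \cite{AC-AR-CT} survive the passage from $\tilde V$ to $C_1\tilde V$ without altering the exponent, but this amounts to routine bookkeeping rather than a genuine obstacle, since all the analytic work is already done there.
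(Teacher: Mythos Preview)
Your proposal is correct and follows essentially the same approach as the paper: the paper's proof is precisely the paragraph preceding the lemma, which uses the monotonicity $\mathcal V\geq C_1\tilde V\Rightarrow m\geq\widetilde m$ and then invokes \cite[Lemmas~3.1--3.2]{AC-AR-CT} with $\tilde V$ replaced by $C_1\tilde V$. Your write-up simply fleshes out these two steps with more detail (the inclusion of admissible radii for monotonicity, and the observation that multiplying $\tilde V$ by a constant only rescales the threshold without affecting the exponent), but the route is identical.
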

Finally by \eqref{eq:stima-G-shen} and the previous lemma we can estimate the Green function $G$
\begin{lemma}
Let $G(x,y)$ denotes the Green function of the Schr\"odinger operator $H$ and assume
that
$\beta>\alpha-2$. Then
\begin{equation}\label{eq:stima-G}
G(x,y)\leq C_k\, \frac{1}{ 1+|x-y|^k\,\left( 1+|x| \right)^{\frac{\beta-\alpha}{2} k }}\;\frac{1}{|x-y|^{N-2}}, \quad
x,y\in\mathbb{R}^N
\end{equation}
for any $k>0$ and some constant $C_k>0$ depending on $k$.
\end{lemma}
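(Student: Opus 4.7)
The plan is to combine the two ingredients already in hand: the kernel bound \eqref{eq:stima-G-shen} from Shen's theorem and the pointwise lower bound on the auxiliary function $m(x)$ provided by Lemma \ref{pr:mx-beta-le-alpha}. No new analytic input should be needed; the work is purely an elementary manipulation of the resulting expression.

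First I would fix $k>0$ and start from \eqref{eq:stima-G-shen}, which gives
\[
G(x,y)\le \frac{C_k}{(1+m(x)|x-y|)^k}\cdot\frac{1}{|x-y|^{N-2}}.
\]
Substituting the estimate $m(x)\ge C(1+|x|)^{(\beta-\alpha)/2}$ from Lemma \ref{pr:mx-beta-le-alpha} into the denominator, one obtains
\[
(1+m(x)|x-y|)^k \ge \bigl(1+C|x-y|(1+|x|)^{(\beta-\alpha)/2}\bigr)^k.
\]
Thus it remains to show there is a constant $C'>0$ (depending only on $k$ and $C$) such that, setting $t:=|x-y|(1+|x|)^{(\beta-\alpha)/2}$,
\[
\frac{1}{(1+Ct)^k}\le \frac{C'}{1+t^k}.
\]

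This last inequality, which is the only real step, follows from an elementary two-case comparison: for $t\le 1$ the left side is bounded by $1$ and the right side by $C'/2$; for $t\ge 1$ one has $(1+Ct)^k\ge C^k t^k$ while $1+t^k\le 2t^k$, so any $C'\ge 2\max\{1,C^{-k}\}$ works. Plugging the resulting inequality back into the bound on $G$ yields exactly
\[
G(x,y)\le C_k\,\frac{1}{1+|x-y|^k(1+|x|)^{\frac{\beta-\alpha}{2}k}}\cdot\frac{1}{|x-y|^{N-2}}.
\]

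I do not foresee a genuine obstacle: all the heavy lifting (Shen's Green-function estimate and the quantitative control of the auxiliary function $m$) has been carried out in the preceding lemmas; the present statement is essentially a cosmetic reformulation designed to put the bound in a form directly usable when estimating the operator $L$ defined in \eqref{eq:def-L}. The only point one must be slightly careful with is that the constant $C_k$ changes from line to line and depends on $k$ (and on $\alpha,\beta,N,b,c$), which is harmless since the statement already allows $C_k$ to depend on $k$.
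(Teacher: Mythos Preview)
Your proposal is correct and matches the paper's approach exactly: the paper does not give an explicit proof of this lemma but simply states that it follows from \eqref{eq:stima-G-shen} together with Lemma~\ref{pr:mx-beta-le-alpha}, which is precisely the combination you carry out. The elementary inequality $(1+Ct)^{-k}\le C'(1+t^k)^{-1}$ you verify is the only missing detail, and your two-case argument for it is fine.
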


We can prove now the boundedness in $L^p(\mathbb{R}^N)$ of the operator $L$ given by \eqref{eq:def-L}

\begin{lemma} \label{chius2} Assume that $\alpha > 2$, $N>2$  and $\beta>\alpha-2$. Then
there exists a positive constant $C=C(\lambda)$ such that for every $0\leq \gamma\leq \beta$ and
$f\in L^p(\mathbb{R}^N)$
\begin{equation}\label{eq:stima-L}
\||x|^\gamma Lf\|_{p}\leq  C\|f\|_{p}.
\end{equation}
\end{lemma}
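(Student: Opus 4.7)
The plan is to treat $|x|^\gamma L$ as an integral operator with kernel
\[
K(x,y) := |x|^\gamma\,\frac{\sqrt{\phi(y)}}{\sqrt{\phi(x)}}\,\frac{G(x,y)}{1+|y|^\alpha}
\]
and to invoke Schur's test in its simplest form: if I can show that
\[
\sup_{x\in\mathbb{R}^N}\int_{\mathbb{R}^N}K(x,y)\,dy<\infty\quad\text{and}\quad\sup_{y\in\mathbb{R}^N}\int_{\mathbb{R}^N}K(x,y)\,dx<\infty,
\]
then $|x|^\gamma L$ is bounded on $L^1(\mathbb{R}^N)$ and on $L^\infty(\mathbb{R}^N)$, and hence on every $L^p(\mathbb{R}^N)$ by interpolation, in particular for the required range $1<p<\infty$. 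By the symmetry $G(x,y)=G(y,x)$ the two estimates are essentially dual, so I would concentrate on the first.

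To control $K$ pointwise, I would use $\sqrt{\phi(z)}=(1+|z|^\alpha)^{b/(2\alpha)}\asymp(1+|z|)^{b/2}$ together with \eqref{eq:stima-G} to obtain, for every $k>0$,
\[
K(x,y)\leq C_k\,\frac{|x|^\gamma}{1+|y|^\alpha}\left(\frac{1+|y|}{1+|x|}\right)^{b/2}\,\frac{1}{1+|x-y|^k\,(1+|x|)^{(\beta-\alpha)k/2}}\,\frac{1}{|x-y|^{N-2}}.
\]
For $|x|\le 1$ all prefactors are bounded and the estimate reduces to the local integrability of $|x-y|^{-(N-2)}$ together with the tail decay coming from \eqref{eq:stima-G}. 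For $|x|>1$ I would split the $y$-integral at the intrinsic scale $1/m(x)\asymp(1+|x|)^{(\alpha-\beta)/2}$ supplied by Lemma \ref{pr:mx-beta-le-alpha}. In the near zone $|x-y|\le 1/m(x)$ one has $|y|\asymp|x|$, so the ratio $((1+|y|)/(1+|x|))^{b/2}$ is bounded; integrating the bare bound $G(x,y)\lesssim|x-y|^{-(N-2)}$ over a ball of radius $1/m(x)$ produces a factor of $(1+|x|)^{\alpha-\beta}$, which combined with the prefactor $|x|^\gamma/(1+|x|)^\alpha$ and the hypothesis $\gamma\le\beta$ gives a uniformly bounded contribution. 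In the far zone $|x-y|>1/m(x)$, taking $k$ large enough in \eqref{eq:stima-G} produces polynomial decay $|x-y|^{-k}(1+|x|)^{-(\beta-\alpha)k/2}$ strong enough to absorb $|x|^\gamma$, the weight ratio, and any growth from large $|y|$, leaving an integrable remainder.

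The principal obstacle is the case analysis: depending on the sign of $b$ and of $\beta-\alpha$, and on the position of $\gamma$ in $[0,\beta]$, different factors of $K$ dominate on different subregions, so in the far zone one should further distinguish $|y|\le|x|/2$, $|y|\asymp|x|$, and $|y|\ge 2|x|$ to identify which factor is doing the work. The decisive feature is the freedom of the exponent $k$ in Shen's estimate, which allows us to trade any fixed algebraic growth for decay in the far zone. The dual bound $\sup_y\int K(x,y)\,dx<\infty$ is obtained analogously, using $G(x,y)=G(y,x)$ and, in the regions where the roles of $x$ and $y$ are exchanged, replacing $|x|^\gamma$ by $|y|^\gamma$ up to a factor $(|x|/|y|)^\gamma$ that is again polynomially controlled and absorbed by taking $k$ sufficiently large.
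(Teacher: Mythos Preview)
Your strategy is correct and is essentially the one the paper uses, with one organisational difference worth noting. The paper splits the integration at the scale $|x-y|\sim \tfrac12(1+|y|)$ rather than at $1/m(x)$, obtaining regions $E_1=\{|x-y|\le \tfrac12(1+|y|)\}$ and $E_2$. On $E_1$ one has $1+|x|\asymp 1+|y|$, so the drift ratio $(\phi(y)/\phi(x))^{1/2}$ is bounded and the kernel is dominated by $\Gamma_0(x,y)=G(x,y)/(1+|y|^\alpha)$; the paper then simply quotes \cite[Lemma~3.4]{AC-AR-CT} for the $L^p$-boundedness of the associated operator $L_0$ and of $|x|^\gamma L_0$. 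On $E_2$ the paper does exactly your Schur argument: it shows $\int_{E_2}|x|^\gamma\Gamma(x,y)\,dy$ and $\int_{E_2}|x|^\gamma\Gamma(x,y)\,dx$ are uniformly bounded by choosing $k$ large in \eqref{eq:stima-G}, and concludes via H\"older's inequality.

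Your proposal to run the Schur test on the whole kernel is a legitimate (and slightly more self-contained) alternative: in the near zone the integral $\int G(x,y)\,dy\lesssim m(x)^{-2}\lesssim(1+|x|)^{\alpha-\beta}$ follows directly from Shen's bound, and together with the prefactor $|x|^\gamma(1+|y|)^{-\alpha}\asymp |x|^\gamma(1+|x|)^{-\alpha}$ and $\gamma\le\beta$ this gives the near-zone Schur bound without invoking \cite{AC-AR-CT}. One small correction: Lemma~\ref{pr:mx-beta-le-alpha} only gives the one-sided bound $m(x)\gtrsim(1+|x|)^{(\beta-\alpha)/2}$, not the two-sided $\asymp$ you wrote, but that inequality is all that is needed for the upper bounds on $G$ and on the Schur integrals.
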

\begin{proof}
Recall the function $\phi(x)=(1+|x|^\alpha)^{b/\alpha}$. Let $\Gamma(x,y)=\sqrt {\frac{\phi (y)}{\phi(x)}}\frac{G(x,y)}{1+|y|^\alpha}$, $f\in L^p(\mathbb{R}^N)$ and
\[
u(x)=\int_{\mathbb{R}^N}\Gamma (x,y)f(y)dy,\quad x\in \R^N .
\]
We have to show that
\[
\||x|^\gamma u\|_p\leq C\|f\|_p.
\]
By setting $\Gamma_0=\frac{G(x,y)}{1+|y|^\alpha}$,
we have $\Gamma(x,y)=\left( \frac{1+|y|^\alpha}{1+|x|^\alpha} \right)^{b/(2\alpha)}\Gamma_0(x,y)$. Moreover
if  we set $L_0f(x):=\int_{\mathbb{R}^N}\Gamma_0 (x,y)f(y)dy,\,x\in \R^N$, then
\cite[Lemma 3.4]{AC-AR-CT} gives
\begin{equation}\label{eq:stima-u_0}
\||x|^\gamma L_0f\|_p\leq C\|f\|_p.
\end{equation}

For $x\in \R^N$ let us consider the regions $E_1:=\{|x-y|\leq \frac{1}{2}(1+|y|) \}$
and $E_2:=\{|x-y|> \frac{1}{2}(1+|y|) \}$ and write
\[
u(x)=\int_{E_1} \Gamma (x,y)f(y)dy+\int_{E_2} \Gamma (x,y)f(y)dy=:u_1(x)+u_2(x)\;.
\]
In $E_1$ we have $1+|y|\leq 1+|x|+|x-y|\leq 1+|x|+\frac{1}{2}(1+|y|)$ and hence $\frac{1}{2}(1+|y|)\leq 1+|x|$. Thus,
\[
 \frac{1+|x|}{1+|y|}\leq \frac{1+|x-y|+|y|}{1+|y|}\leq \frac{3}{2}\text{ and }\frac{1+|y|}{1+|x|}\leq 2\,.
\]
Therefore there are constants $C,\,\widetilde{C}>0$ such that $\left(\frac{1+|y|^\alpha}{1+|x|^\alpha}\right)^{b/(2\alpha)}\le \widetilde{C}\left(\frac{1+|y|}{1+|x|}\right)^{b/2}\leq C2^{|b|/2}$ and
$\Gamma(x,y)\leq C \Gamma_0(x,y)$ in $E_1$.
So, we have $$|u_1(x)|\leq C\int_{\mathbb{R}^N}\Gamma_0(x,y)|f(y)|dy=CL_0(|f|)(x).$$ By \eqref{eq:stima-u_0} it follows that
$\| |x|^\gamma u_1\|_p\leq C \|f\|_p$.

As regards  the region $E_2$, we have, by H\"older's inequality,
\begin{align}\label{eq:u-2-holder}
& \left| |x|^\gamma u_2(x) \right|
  \leq |x|^\gamma \int_{E_2}\Gamma(x,y)|f(y)|dy=\int_{E_2}  \left( |x|^\gamma \Gamma(x,y)\right) ^{\frac{1}{p'}}
    \left( |x|^\gamma \Gamma(x,y)\right)^{\frac{1}{p}}|f(y)|dy\nonumber \\
&\quad\leq
  \left(\int_{E_2}|x|^\gamma \Gamma(x,y)dy \right)^{\frac{1}{p'}}
  \left(\int_{E_2}|x|^\gamma \Gamma(x,y)|f(y)|^p dy\right)^{\frac{1}{p}}\;.
\end{align}
We propose to estimate first $\int_{E_2}|x|^\gamma\Gamma(x,y)dy$.
In $E_2$ we have
$1+|y|\leq 2|x-y|$ and
$1+|x|\leq  1+|y|+|x-y|\leq 3|x-y|$,
then
\[
\left(\frac{1+|y|^\alpha}{1+|x|^\alpha}\right)^{b/(2\alpha)}\le \widetilde{C}\left(  \frac{1+|y|}{1+|x|}\right)^{b/2}\leq C|x-y|^{|b|/2}.
\]
From \eqref{eq:stima-G} and by the symmetry of $G$ it follows that
\begin{eqnarray*}
|x|^\gamma \Gamma (x,y) &=& |x|^\gamma \left(\frac{1+|y|^\alpha}{1+|x|^\alpha}\right)^{b/(2\alpha)} \frac{G(x,y)}{1+|y|^\alpha}\\
&\leq & C |x|^\gamma G(x,y)|x-y|^{|b|/2}\\
&\leq & C \frac{1+|x|^\beta}{|x-y|^k\left( 1+|y| \right)^{k\frac{\beta-\alpha}{2}}}\frac{1}{|x-y|^{N-2-|b|/2}}\\
&\leq & C \frac{1}{|x-y|^{k-\beta+N-2-|b|/2}}
    \frac{1}{\left( 1+|y| \right)^{k\frac{\beta-\alpha}{2}}},\quad y\in E_2.
\end{eqnarray*}
For every $k>\beta-N+2+|b|/2$, taking into account that $\frac{1}{|x-y|}\leq 2 \frac{1}{1+|y|}$, we get
\[
|x|^\gamma \Gamma (x,y) \leq
    C\frac{1}{\left( 1+|y| \right)^{k\frac{\beta-\alpha+2}{2} +N-2-\beta-|b|/2}}\;.
\]
Since $\beta-\alpha+2>0$ we can choose $k$ such that $\frac{k}{2}(\beta-\alpha +2)+N-2-\beta-|b|/2>N$,
then there is a constant $C_1>0$ such that
\begin{align*}
\int_{E_2}|x|^\gamma \Gamma(x,y)dy\leq
C\int_{\mathbb{R}^N} \frac{1}{(1+|y|)^{\frac{k}{2}(2+\beta-\alpha)+N-2-\beta-|b|/2}}dy\le C_1.
\end{align*}
Moreover by
\eqref{eq:stima-G}
as above we have
\begin{eqnarray*}
|x|^\gamma \Gamma (x,y) &\leq & C|x|^\gamma G(x,y)|x-y|^{|b|/2}\\
  &\leq & C\frac{1+|x|^\beta}{|x-y|^k\left( 1+|x| \right)^{k\frac{\beta-\alpha}{2}}}\frac{1}{|x-y|^{N-2-|b|/2}}\\
  &\leq & C \frac{1}{|x-y|^{k-\beta +N-2-|b|/2}}
    \frac{1}{\left( 1+|x| \right)^{k\frac{\beta-\alpha}{2}}}\;.
\end{eqnarray*}
Taking into account that $\frac{1}{|x-y|}\leq 3\frac{1}{1+|x|}$, arguing as above we obtain
\begin{align}\label{eq:vudx}
\int_{E_2}|x|^\gamma \Gamma(x,y)dx
\leq C_2
\end{align}
for some constant $C_2>0$.
Hence \eqref{eq:u-2-holder} implies
\begin{equation}\label{eq:V-2-u-2}
\left| |x|^\gamma u_2(x) \right|^p \leq C_1^{p-1}\int_{E_2}|x|^\gamma \Gamma(x,y)|f(y)|^pdy.
\end{equation}
Thus, by \eqref{eq:V-2-u-2} and \eqref{eq:vudx}, we have
\begin{align*}
&\||x|^\gamma u_2\|^p_p\leq C_1^{p-1} \int _{\mathbb{R}^N} \int _{\mathbb{R}^N}|x|^\gamma\Gamma(x,y)\chi_{\{|x-y|>\frac{1}{2}(1+|y|)\}}(x,y)|f(y)|^p dydx\\
&\quad = C_1^{p-1}\int _{\mathbb{R}^N} |f(y)|^p \left(\int _{E_2}|x|^\gamma \Gamma(x,y)dx\right)dy\leq C_1^{p-1}C_2\|f\|_p^p\;.
\end{align*}
\end{proof}

Here and in Section 4 we will need the following covering result, see \cite[Proposition 6.1]{cup-for}.

\begin{proposition} \label{pr:covering}
Given a covering $\mathcal{F}= \{B(x,\rho(x))\}_{x\in \mathbb{R}^N}$ of $\R^N$, where $\rho:\mathbb{R}^N \to \mathbb{R}_+$ is a Lipschitz continuous function with Lipschitz constant $k<1/2$, there exists a countable subcovering $\{B(x_n,\rho(x_n))\}_{n\in\N}$ of $\mathbb{R}^N$ and $\zeta=\zeta (N,k)\in \N$ such that at most $\zeta$ among the
double balls
$\{B(x_n,2\rho(x_n))\}_{n\in \N}$ overlap.
\end{proposition}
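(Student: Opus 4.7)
The plan is a Vitali-type maximal disjointness argument at a shrunken scale, using the Lipschitz control on $\rho$ both to promote the shrunken family to a cover and to bound the overlap of the doubled balls.

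First I would fix a parameter $\delta\in(0,1)$ with $\delta(2+k)\leq 1$ (since $k<1/2$, $\delta=1/3$ works) and, by a Zorn-type maximality argument applied to the family $\{B(x,\delta\rho(x))\}_{x\in\R^N}$, extract a maximal subfamily $\{B(x_n,\delta\rho(x_n))\}_{n\in\N}$ of pairwise disjoint balls. Countability is automatic because pairwise disjoint open sets of positive Lebesgue measure in the $\sigma$-compact space $\R^N$ form an at most countable family.

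Next I would verify that the full-size balls $\{B(x_n,\rho(x_n))\}$ still cover $\R^N$. Given $y\in\R^N$, maximality forces $B(y,\delta\rho(y))\cap B(x_n,\delta\rho(x_n))\neq\emptyset$ for some $n$, hence $|y-x_n|\leq\delta(\rho(y)+\rho(x_n))$. The Lipschitz bound on $\rho$ then yields
\[
\rho(y)\leq \rho(x_n)+k|y-x_n|\leq \rho(x_n)+k\delta(\rho(y)+\rho(x_n)),
\]
so $\rho(y)\leq \frac{1+k\delta}{1-k\delta}\rho(x_n)$. Inserting this back gives $|y-x_n|\leq \frac{2\delta}{1-k\delta}\rho(x_n)$, which is $\leq\rho(x_n)$ precisely when $\delta(2+k)\leq 1$, i.e.\ for our choice of $\delta$.

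For the overlap bound I would fix $z\in\R^N$ and set $I:=\{n : z\in B(x_n,2\rho(x_n))\}$. For $n,m\in I$, $|x_n-x_m|<2(\rho(x_n)+\rho(x_m))$, so the Lipschitz inequality gives $|\rho(x_n)-\rho(x_m)|<2k(\rho(x_n)+\rho(x_m))$; since $k<1/2$ this forces
\[
\frac{1-2k}{1+2k}\rho(x_m)\leq \rho(x_n)\leq \frac{1+2k}{1-2k}\rho(x_m).
\]
Fixing any $n_0\in I$ and setting $r_0=\rho(x_{n_0})$, all radii $\rho(x_n)$ with $n\in I$ are comparable to $r_0$ up to constants depending only on $k$; each center $x_n$ then lies in a ball $B(z,C_k r_0)$, and each of the disjoint balls $B(x_n,\delta\rho(x_n))$ sits inside $B(z,C'_k r_0)$ while having volume comparable to $r_0^N$. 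Comparing volumes bounds $\#I$ by a constant $\zeta=\zeta(N,k)$. The main technical point is calibrating $\delta$ so that the two competing constraints hold simultaneously: maximality at scale $\delta\rho$ must force the full-size balls to cover (requiring $\delta(2+k)\leq 1$), and doubling to $2\rho$ must keep the radii comparable on any overlap set (requiring $2k<1$); both are possible precisely because $k<1/2$.
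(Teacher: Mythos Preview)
Your argument is correct: the Vitali-type selection at scale $\delta\rho$ with $\delta(2+k)\le 1$, the promotion to a cover of the full-size balls via the Lipschitz bound, and the volume comparison for the doubled balls are all sound, and the condition $k<1/2$ is used exactly where it must be (to make the radii on any overlap set uniformly comparable).

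There is nothing to compare against in the paper itself: the proposition is stated without proof and simply attributed to \cite[Proposition~6.1]{cup-for}. Your write-up therefore supplies strictly more than the paper does. The approach you give is in fact the standard one (and essentially the one in the cited reference), so no alternative strategy is being bypassed.
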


We propose now to characterize the domain $D_{p,max}(A)$.
\begin{proposition}\label{max-reg}
Assume that $N>2$, $\alpha>2$ and $\beta>\alpha-2$.
For $1<p<\infty$ the following holds
$$D_{p,max}(A)=\{u\in W^{2,p}(\R^N): Au\in L^p(\R^N)\}.$$
\end{proposition}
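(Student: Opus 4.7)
The inclusion $\supseteq$ is immediate: any $u\in W^{2,p}(\R^N)$ with $Au\in L^p(\R^N)$ automatically lies in $L^p(\R^N)\cap W^{2,p}_{loc}(\R^N)$, so $u\in D_{p,max}(A)$. The real content is the reverse inclusion. Fix $u\in D_{p,max}(A)$ and set $f=Au\in L^p(\R^N)$. The strategy is to first establish the weighted a priori bound $\|Vu\|_p\leq C(\|u\|_p+\|Au\|_p)$, and then derive $u\in W^{2,p}(\R^N)$ from classical interior $L^p$ estimates applied on a fixed-radius covering.

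For the first step I would use the operator $L$ of \eqref{eq:def-L}. Fix $\lambda\ge\lambda_0$ large enough that the potential $\mathcal V$ of the Schr\"odinger operator $H$ is positive (cf.\ the discussion preceding Lemma \ref{pr:mx-beta-le-alpha}). The key point is the identification $u=L(\lambda u-Au)$, which in turn relies on the uniqueness of solutions of $\lambda v-Av=0$ within $D_{p,max}(A)$. This uniqueness can be proved by combining the Lyapunov function constructed in Section \ref{section2} with a comparison/maximum principle argument (alternatively, by multiplying $\lambda v-Av=0$ by $v|v|^{p-2}$, integrating against a cut-off, and exploiting the positivity of the potential in the resulting energy identity). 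Once the identification is secured, Lemma \ref{chius2} with $\gamma=\beta$ gives
\[
\|Vu\|_p=c\||x|^\beta u\|_p\leq C\|\lambda u-Au\|_p\leq C'(\|u\|_p+\|Au\|_p).
\]

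For the second step, rewrite $Au=f$ as
\[
q(x)\Delta u + b|x|^{\alpha-1}\frac{x}{|x|}\cdot\nabla u = f+Vu\in L^p(\R^N)
\]
and divide through by $q(x)=1+|x|^\alpha$. Since $q\ge 1$ the right-hand side has $L^p$-norm bounded by $\|f\|_p+\|Vu\|_p$, while $|b||x|^{\alpha-1}/(1+|x|^\alpha)\le|b|$ on $\R^N$. The reduced operator $\Delta + \bigl(b|x|^{\alpha-1}/(1+|x|^\alpha)\bigr)\frac{x}{|x|}\cdot\nabla$ thus has the identity as its leading part and a uniformly bounded drift. The classical Calder\'on--Zygmund interior $L^p$ estimate on the unit ball then yields, for every $x_0\in\R^N$, a constant $C$ independent of $x_0$ with
\[
\|u\|_{W^{2,p}(B(x_0,1/2))}\le C\bigl(\|u\|_{L^p(B(x_0,1))}+\|(f+Vu)/q\|_{L^p(B(x_0,1))}\bigr).
\]
Choosing a countable subcovering $\{B(x_n,1)\}$ of $\R^N$ with bounded overlap via Proposition \ref{pr:covering} (applied with constant radius $\rho\equiv 1$, which is trivially Lipschitz with constant $0$), raising the inequality to the $p$-th power and summing, one obtains $\|u\|_{W^{2,p}(\R^N)}\le C(\|u\|_p+\|Au\|_p)$ after inserting the bound from step one.

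The main obstacle is step one, namely the identification $u=L(\lambda u-Au)$, which requires uniqueness of the resolvent equation in $D_{p,max}(A)$. Once the weighted control $\|Vu\|_p\le C(\|u\|_p+\|Au\|_p)$ is available, the remaining argument is a routine combination of interior regularity and a covering lemma, made clean by the fact that dividing the full equation by $q$ produces an operator with constant leading coefficient and uniformly bounded lower-order terms, so that no delicate rescaling at infinity is needed.
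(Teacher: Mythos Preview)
Your second step is fine and would, in fact, be a pleasant simplification of the paper's argument: once $Vu\in L^p$, moving the potential to the right-hand side leaves the uniformly elliptic operator $\Delta+F\cdot\nabla$ with bounded drift $F(x)=b|x|^{\alpha-1}(1+|x|^\alpha)^{-1}x/|x|$, and the interior $L^p$ estimate plus the covering lemma give $u\in W^{2,p}(\R^N)$ exactly as you say.

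The gap is in step one. You need the identification $u=L(\lambda u-Au)$ for an arbitrary $u\in D_{p,max}(A)$, and this is precisely the content of Theorem~\ref{invertibile}, which in the paper's logical order is proved \emph{after} Proposition~\ref{max-reg} and, for $b>0$, explicitly invokes Proposition~\ref{max-reg} to justify the integration by parts (``using the fact that $u\in W^{2,p}$, by Proposition~\ref{max-reg}''). So within the paper's structure your argument is circular. Your two proposed workarounds do not close the gap: the Lyapunov function from Section~\ref{section2} yields uniqueness for bounded continuous solutions via the maximum principle, not for $L^p$ functions that are merely $W^{2,p}_{loc}$; and the cut-off energy method runs into the unbounded diffusion, since the cross term $\int |\nabla\eta_R|^2\,q(x)\,|u|^p\,dx$ is of order $R^{\alpha-2}\|u\|_{L^p(B(2R)\setminus B(R))}^p$ with $\alpha>2$, and there is no reason this should tend to zero.

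The paper sidesteps this circularity by proving $u\in W^{2,p}$ directly, without first controlling $Vu$. Dividing by $q$ produces $\widetilde A u=\Delta u+F\cdot\nabla u-\widetilde V u=f/q\in L^p$, with $F$ bounded and $\widetilde V(x)=c|x|^\beta/(1+|x|^\alpha)$. If $\beta\le\alpha$ then $\widetilde V$ is bounded and classical regularity applies at once. If $\beta>\alpha$ then $\widetilde V$ is unbounded but lies in the reverse H\"older class $B_q$ for all $q$, and the Auscher--Ben~Ali domain characterization for $\Delta-\widetilde V$ yields the a~priori inequality \eqref{au-ben}; an interior estimate that absorbs the bounded drift $F$, combined with the covering of Proposition~\ref{pr:covering}, then gives $\|\Delta u\|_p\le C(\|\widetilde A u\|_p+\|u\|_p)$ directly. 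This route never needs the resolvent identity or the operator $L$.
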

\begin{proof}
It suffices to prove that $D_{p,max}(A)\subset \{u\in W^{2,p}(\R^N): Au\in L^p(\R^N)\}$. Let $u\in D_{p,max}(A)$. Then $f:=Au\in L^p(\R^N)$. This implies that
$$\widetilde{A}u:=\Delta u+b\frac{|x|^{\alpha -2}}{1+|x|^\alpha}x\cdot \nabla u-\frac{c|x|^\beta}{1+|x|^\alpha}u=\frac{f}{1+|x|^\alpha}\in L^p(\R^N).$$
If $\beta \le \alpha$ then the potential $\widetilde{V}(x):=\frac{c|x|^\beta}{1+|x|^\alpha}$ is bounded and by standard regularity results for uniformly elliptic operators with bounded coefficients we deduce that $u\in W^{2,p}(\R^N)$.

Let us assume now that $\beta >\alpha$. Then $\widetilde{V}\in B_q$ for all $q\in (1,\infty)$. So, by \cite[Theorem 1.1 and Corollary 1.3]{Au-Ben},
we have that $D_{p,max}(\Delta -\widetilde{V})=W^{2,p}(\R^N)\cap D_{p,max}(\widetilde{V})$ and the following estimate holds
\begin{equation}\label{au-ben}
\|\widetilde{V}f\|_p+ \|\Delta f\|_p\leq C\|\Delta f-\widetilde{V}f\|_p
\end{equation}
for all $f\in D_{p,max}(\Delta -\widetilde{V})$ with a constant $C$ independent of $f$.

Fix now $x_0\in \R^N$ and $R\ge 1$. We propose to prove the following interior estimate
\begin{equation}\label{eq:interior-estimate1}
 \|\Delta u\|_{L^p(B(x_0,\frac{R}{2}))}\leq C\left(\|\widetilde{A} u\|_{L^p(B(x_0,R))}+\|u\|_{L^p(B(x_0,R))}\right)
\end{equation}
with a constant $C$ independent of $u$ and $R$. To this purpose take $\sigma \in (0,1)$ and set $\sigma ':=\frac{\sigma +1}{2}$. Consider a cutoff function
$\vartheta\in C_c^{\infty }(\mathbb{R}^N)$ such that $0\leq \vartheta \leq 1$,
$\vartheta(x)=1$ for $x\in B(x_0,\sigma R)$, $\vartheta(x)=0$ for $x\in B^c(x_0,\sigma'R)$,
$\|\nabla \vartheta \|_{\infty}\leq \frac{C}{R(1-\sigma')}$ and $\|\Delta \vartheta\|_{\infty}\leq \frac{C}{R^2(1-\sigma')^2}$ with a constant $C$ independent of $R$.
\\
In order to simplify the notation we write $\|\cdot\|_{p,r}$ instead of $\|\cdot \|_{L^p\left( B(x_0,r) \right)}$. The function $v=u\vartheta$ belongs to $D_{p,max}(\Delta-\tilde V)$ and so by \eqref{au-ben} we have
\begin{eqnarray*}
& & \|\Delta u\|_{p,\sigma R}\leq \|\Delta v\|_{p,\sigma'R}\leq C \|\Delta v-\widetilde{V}v\|_{p,\sigma'R}\\
&\leq C & \left( \|\Delta v+F\cdot \nabla v-\widetilde{V}v\|_{p,\sigma'R} +\|F\cdot \nabla v\|_{p,\sigma'R}\right)\\
&\leq C & \left(
	    \| \widetilde{A}u \|_{p,\sigma'R}+2\|\nabla \vartheta\|_{\infty}\|\nabla u\|_{p,\sigma 'R}+\|\Delta\vartheta\|_{\infty}\|u\|_{p,\sigma' R}
	    +\|F\|_{\infty}\|\nabla \vartheta\|_{\infty}\|u\|_{p,\sigma'R}
	    \right.\\
& &\left. \,\,+\|F\|_{\infty}\|\nabla u\|_{p,\sigma' R}+\|F\|_{\infty}\|\nabla \vartheta \|_{\infty}\|u\|_{p,\sigma' R} \right)\\
&\leq C & \left(  \| \widetilde{A}u \|_{p,\sigma'R}+(\|\nabla \vartheta\|_{\infty}+\|F\|_\infty)\|\nabla u\|_{p,\sigma 'R}
	  +(\|\Delta\vartheta\|_{\infty}+\|\nabla \vartheta \|_\infty)\|u\|_{p,\sigma' R}\right)\\
&\leq C & \left(  \| \widetilde{A} u \|_{p,\sigma'R}+\frac{1}{R(1-\sigma')}\|\nabla u\|_{p,\sigma 'R}
	  +\frac{1}{R^2(1-\sigma')^2}\|u\|_{p,\sigma' R}\right),
\end{eqnarray*}
where $F(x):=b\frac{|x|^{\alpha -2}}{1+|x|^\alpha}x$ and $C$ a positive constant independent of $u$ and $R$, which may change from line to line. Multiplying the above estimate by $R^2(1-\sigma ')^2$ and taking into account that $1-\sigma=2(1-\sigma')$ we obtain
\[
R^2(1-\sigma)^2\|\Delta u\|_{p,\sigma R} \leq C\left( R^2\|\widetilde{A} u\|_{p,R}+R(1-\sigma')\|\nabla u\|_{p,\sigma'R}+\|u\|_{p,R} \right).
\]
So,
\begin{equation}\label{eq:sup-eq}
 \sup_{\sigma\in (0,1)}\left\{ R^2(1-\sigma)^2\|\Delta u \|_{p,\sigma R} \right\}\leq
 C\left( \sup_{\sigma\in (0,1)}\left\{R(1-\sigma)\|\nabla u\|_{p,\sigma R}\right\}+R^2\|\widetilde{A} u\|_{p,R}+\|u\|_{p,R} \right).
\end{equation}
Thus, by \cite[Theorem 7.28]{DG-NT}, for every $\gamma>0$ there exists $\sigma_\gamma\in (0,1)$ such that
\begin{eqnarray*}
 \sup_{\sigma\in (0,1)}\left\{R(1-\sigma)\|\nabla u\|_{p,\sigma R}\right\} &\leq & R(1-\sigma_\gamma)\|\nabla u\|_{p,\sigma_\gamma R}+\gamma \\
 &\le & \varepsilon R^2(1-\sigma_\gamma)^2\|\Delta u\|_{p,\sigma_\gamma R}+\frac{C}{\varepsilon}\|u\|_{p, R}+\gamma \\
 &\le & \varepsilon \sup_{\sigma\in (0,1)}\left\{R^2(1-\sigma)^2\|\Delta u\|_{p,\sigma R}\right\}+\frac{C}{\varepsilon}\|u\|_{p,R}+\gamma.
\end{eqnarray*}
Letting $\gamma \to 0$ we deduce that
\begin{equation}\label{eq:intepol-gamma}
 \sup_{\sigma\in (0,1)}\left\{R(1-\sigma)\|\nabla u\|_{p,\sigma R}\right\}
 \leq \varepsilon \sup_{\sigma\in (0,1)}\left\{R^2(1-\sigma)^2\|\Delta u\|_{p,\sigma R}\right\}+\frac{C}{\varepsilon}\|u\|_{p,R}.
\end{equation}
Putting \eqref{eq:intepol-gamma} into \eqref{eq:sup-eq} with a suitable choice of $\varepsilon$ we obtain
\[
 \sup_{\sigma\in (0,1)}\left\{ R^2(1-\sigma)^2\|\Delta u \|_{p,\sigma R} \right\}\leq
 C\left( R^2\|\widetilde{A} u\|_{p,R}+\|u\|_{p,R} \right).
\]
Hence \eqref{eq:interior-estimate1} follows since $(1-\frac{1}{2})^2R^2\|\Delta u \|_{p, \frac{R}{2}}\leq \sup_{\sigma\in (0,1)}\left\{ R^2(1-\sigma)^2\|\Delta u \|_{p,\sigma R} \right\}$.

To prove that $u\in W^{2,p}(\R^N)$ we consider a covering $\{B(x_n,R/2): n\in \N \}$ of $\R^N$ such that at most $\zeta$ among the doubled balls
$\{B(x_n,R): n\in \N \}$ overlap for some $\zeta(N)\in \N$, by Proposition \ref{pr:covering}. Applying \eqref{eq:interior-estimate1} with the ball $B(x_n,R/2)$ we obtain
\begin{eqnarray*}
\|\Delta u\|_p &\le & \sum_{n\in \N}\|\Delta u\|_{L^p(B(x_n,R/2))}\\
&\le & C \sum_{n\in \N}\left(\|\widetilde{A}u\|_{L^p(B(x_n,R))}+\|u\|_{L^p(B(x_n,R))}\right)\\
&\le & C\zeta \left(\|\widetilde{A}u\|_p+\|u\|_p\right).
\end{eqnarray*}
This ends the proof.
\end{proof}

We show now the invertibility of $\lambda-A_p$ in $D_{p,max}(A)$ for all $\lambda \ge \lambda_0$, where $\lambda_0>0$ is such that $\mathcal{V}\ge 0$ for all $\lambda \ge \lambda_0$.
\begin{theorem}
\label{invertibile}
Assume that $N>2,\,\alpha > 2$ and $\beta>\alpha-2$. Then $[\lambda_0,\infty)\subset \rho(A_p)$ and $(\lambda -A_p)^{-1}=L$ for all $\lambda \ge \lambda_0$.
Moreover there exists $C=C(\lambda)>0$ such that, for every
$0\leq \gamma\leq \beta $ and $\lambda\geq \lambda_0$, the following holds
\begin{equation}\label{eq:stima-apriori}
\| |\cdot |^{\gamma }u\|_{p}\leq  C\|\lambda u-A_pu\|_{p},\quad \forall u\in D_{p,max}(A)\;.
\end{equation}
\end{theorem}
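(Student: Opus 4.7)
The theorem has three parts: $L$ is a right inverse of $\lambda - A_p$, $\lambda - A_p$ is injective on $D_{p,\max}(A)$, and the weighted a priori estimate \eqref{eq:stima-apriori}. The plan treats each in turn, with injectivity being the main obstacle.

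\emph{Step 1 (right inverse).} For $f \in C_c^\infty(\mathbb{R}^N)$ set $g := \sqrt{\phi}\,(1+|\cdot|^\alpha)^{-1} f$, bounded with compact support. Since $\lambda \geq \lambda_0$ forces $\mathcal V \geq 0$ and $\mathcal V \in B_{N/2}$, Shen's theory~\cite{shen-1995} combined with interior elliptic regularity gives $v(x) := \int_{\mathbb{R}^N} G(x,y) g(y)\, dy \in W^{2,p}_{\mathrm{loc}}(\mathbb{R}^N)$ as a pointwise a.e.\ solution of $-H v = g$. Reversing the transformation underlying \eqref{eq:lambdau-Au}, the function $u := v/\sqrt{\phi} = Lf$ satisfies $\lambda u - A u = f$ a.e. Lemma~\ref{chius2} with $\gamma = 0$ gives $u \in L^p$, so $A u = \lambda u - f \in L^p$ and hence $u \in D_{p,\max}(A)$. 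Closedness of $A_p$ on $D_{p,\max}(A)$, boundedness of $L$ by Lemma~\ref{chius2}, and density of $C_c^\infty$ in $L^p$ then extend $(\lambda - A_p) L f = f$ to all $f \in L^p$.

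\emph{Step 2 (injectivity).} Suppose $u \in D_{p,\max}(A)$ with $\lambda u - A_p u = 0$; I aim to show $u = 0$. Proposition~\ref{max-reg} gives $u \in W^{2,p}(\mathbb{R}^N)$. For $p > N/2$, Sobolev embedding places $u$ in $C_b^{0,\delta}(\mathbb{R}^N)$; the uniform H\"older bound together with $u \in L^p$ forces $u \to 0$ at infinity, so $u \in C_0(\mathbb{R}^N)$. Since $A u = \lambda u \in L^\infty$, interior $L^q$-regularity yields $u \in W^{2,q}_{\mathrm{loc}}$ for every $q < \infty$, so $u \in D_{\max}(A) \cap C_0(\mathbb{R}^N)$. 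The $C_0$-generation result from Section~\ref{section2} then places $\lambda$ (possibly after enlarging $\lambda_0$) in the resolvent set of that generator, forcing $u = 0$. For $1 < p \leq N/2$ I bootstrap: Sobolev yields $u \in L^{p^*}$ with $p^* = Np/(N-2p)$, hence $A u = \lambda u \in L^{p^*}$, and Proposition~\ref{max-reg} lifts $u$ into $W^{2,p^*}(\mathbb{R}^N)$; finitely many iterations reach $p^{(k)} > N/2$, reducing to the previous case. The delicate point here is that the bootstrap must be compatible with the unbounded coefficients of $A$: this is exactly guaranteed by the equation $A u = \lambda u$, which ties the growth of $A u$ to that of $u$, and by Proposition~\ref{max-reg}, which extracts $W^{2,p}$-regularity from the sole hypothesis that $u$ and $Au$ lie in $L^p$.

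\emph{Step 3 (weighted estimate).} Once invertibility is established, every $u \in D_{p,\max}(A)$ equals $L f$ with $f := \lambda u - A_p u$. Lemma~\ref{chius2} applied to this $f$ for $0 \leq \gamma \leq \beta$ then yields $\||\cdot|^\gamma u\|_p = \||\cdot|^\gamma L f\|_p \leq C(\lambda) \|\lambda u - A_p u\|_p$, which is precisely \eqref{eq:stima-apriori}.
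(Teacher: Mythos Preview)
Your Steps~1 and~3 match the paper. The real divergence is Step~2 (injectivity). The paper works directly in $L^p$ via energy estimates, splitting on the sign of $b$: for $b\le 0$ it sets $v=u\sqrt{\phi}\in W^{2,p}(\R^N)$ (here $\sqrt{\phi}\le 1$, so $v\in L^p$) and tests $Hv=0$ against $v|v|^{p-2}$; for $b>0$ it tests $(1+|x|^\alpha)^{-1}(\lambda u - A_pu)$ against $u|u|^{p-2}$, using Proposition~\ref{max-reg} to justify the integration by parts. In each case the sign structure of the resulting integrals forces $u=0$ at the stated $\lambda_0$, with no bootstrap and no recourse to Section~\ref{section2}.

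Your bootstrap route is sound in outline but, as written, circular. The ``$C_0$-generation result from Section~\ref{section2}'' is the proposition that $(A,D_{\max}(A))\cap C_0(\R^N)$ generates $(T(t))$; for $b\le 2-N$ its proof explicitly invokes Theorem~\ref{generation} and Theorem~\ref{risolvente}, both of which rest on Theorem~\ref{invertibile} itself. What you actually need is much less: uniqueness in $D_{\max}(A)$ for $\lambda$ exceeding the Lyapunov constant, which follows from $A\psi\le C\psi$ together with the maximum principle---this is established in Section~\ref{section2} \emph{before} the generation proposition and is independent of any $L^p$ input. Replace your appeal to the $C_0$-generation proposition by this Lyapunov/maximum-principle argument and the circularity disappears, with the residual caveat that your ``possibly after enlarging $\lambda_0$'' is then a genuine concession that the paper's direct energy method avoids.
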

\begin{proof}
 First we prove the injectivity of
$\lambda- A_p$ for $\lambda\geq \lambda_0$. Let $u\in D_{p,max}(A)$ such that $\lambda u-A_pu=0$. We have to distinguish two cases. The first one is when $b\le 0$. In this case,
by \eqref{eq:lambdau-Au} we have
$Hv=\Delta v-\mathcal V v=0$ with $v=u\sqrt{\phi}\in D_{p,max}(H)=W^{2,p}(\R^N)\cap D_{p,max}\left( \mathcal {V} \right)$,
(see \cite{okazawa} or \cite{Au-Ben}).
Then multiplying $Hv $ by $v|v|^{p-2}$ and integrating by part (see \cite{Me-Sp}) over $\mathbb{R}^N$, we have
\begin{eqnarray*}
0&=& \int_{\mathbb{R}^N} v|v|^{p-2} \Delta v \,dx-\int_{\mathbb{R}^N}\mathcal V |v|^pdx \\
&=& -(p-1)\int_{\mathbb{R}^N}|v|^{p-2}|\nabla v|^2dx-\int_{\mathbb{R}^N}\mathcal V |v|^pdx.
\end{eqnarray*}
Then we have $v\equiv 0$ and hence $u\equiv 0$.

The second case is when $b>0$. For this we multiply $\frac{1}{1+|x|^\alpha}(\lambda u-A_pu)$ by $u|u|^{p-2}$ and using the fact that $u\in W^{2,p}$, by Proposition \ref{max-reg}, we have
\begin{eqnarray*}
0 &=& \int_{\R^N}\frac{1}{1+|x|^\alpha}(\lambda u-A_pu)u|u|^{p-2}\,dx\\
&=& \int_{\R^N}\left(\frac{\lambda+c|x|^\beta}{1+|x|^\alpha}+\frac{b(N+\alpha -2)|x|^{\alpha -2}+b(N-2)|x|^{2\alpha -2}}{p(1+|x|^\alpha)^2}\right)|u|^p\,dx \\
& & \quad +(p-1)\int_{\R^N}|\nabla u|^2 |u|^{p-2}\,dx.
\end{eqnarray*}
Hence, $u\equiv 0$.

Let now $f\in L^p(\mathbb{R}^N)$ and $u(x)=Lf(x)$ defined by \eqref{eq:def-L}.
Applying Lemma \ref{chius2} with $\gamma=0$,
we have $u\in L^p(\mathbb{R}^N)$. Moreover $u_n:=Lf_n$ satisfies $\lambda u_n-A_pu_n=f_n$ for any $f_n\in C_c^\infty(\mathbb{R}^N)$ approximating $f$ in $L^p(\mathbb{R}^N)$. Thus, $\lim_{n\to \infty}\|u_n-u\|_p=0$. Since, by local elliptic regularity, $A_p$ on $D_{p,max}(A)$ is closed, it follows that
$u\in D_{p,max}(A)$ and $\lambda u-A_pu=f$.
Thus $\lambda-A_p$ is invertible and $(\lambda-A_p)^{-1}\in \mathcal{L}(L^p(\mathbb{R}^N))$ for all $\lambda \ge \lambda_0$.
\\
Finally, \eqref{eq:stima-apriori} follows from \eqref{eq:stima-L}.
\end{proof}

%
%
%
%
The following result shows that the resolvent in $L^p(\R^N)$ and $C_0(\R^N)$ coincides.
\begin{theorem}\label{risolvente}
Assume that $N>2$, $\beta>\alpha-2$ and $\alpha>2$. Then, for all $\lambda \ge \lambda_0$, $(\lambda-A_p)^{-1}$ is a positive operator on $L^p(\mathbb{R}^N)$. Moreover, if $f \in L^p(\mathbb{R}^N)\cap C_{0}(\mathbb{R}^N)$, then $(\lambda -A_p)^{-1}f = (\lambda-A)^{-1}f$.
\end{theorem}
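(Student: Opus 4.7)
The plan is to split the argument into two parts: establishing positivity from the explicit representation of the resolvent, and establishing consistency with the $C_0$-resolvent via a density argument after checking agreement on test functions.

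\textbf{Positivity.} By Theorem \ref{invertibile}, for $\lambda\ge\lambda_0$ we have $(\lambda-A_p)^{-1}=L$, where $L$ is given by the explicit integral formula \eqref{eq:def-L}. The choice of $\lambda_0$ guarantees that the Schr\"odinger potential $\mathcal V$ is nonnegative, hence the Green function $G$ of $H=\Delta-\mathcal V$ is nonnegative (this is a classical consequence of the maximum principle for Schr\"odinger operators with nonnegative potential). Since both $\phi(x)=(1+|x|^\alpha)^{b/\alpha}$ and $1+|y|^\alpha$ are strictly positive, the integrand in \eqref{eq:def-L} is nonnegative whenever $f\ge 0$, and positivity of $(\lambda-A_p)^{-1}$ on $L^p(\R^N)$ follows at once.

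\textbf{Agreement on $C_c^\infty$.} For $f\in C_c^\infty(\R^N)$, I would set $u:=Lf$ and $v:=(\lambda-A)^{-1}f$. The key observation is that the defining formula \eqref{eq:def-L} does not depend on $p$, and $f\in L^q(\R^N)$ for every $q\in(1,\infty)$; hence $u\in D_{q,max}(A)$ for every $q$, and Proposition \ref{max-reg} gives $u\in W^{2,q}(\R^N)$ for every such $q$. Choosing $q>N/2$ and applying Sobolev's embedding, I obtain $u\in C_0(\R^N)$, while $Au=\lambda u-f\in C_b(\R^N)$ forces $u\in D_{max}(A)\cap C_0(\R^N)$. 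Since $v$ lies in the same space, the difference $w=u-v$ is a bounded classical solution of $\lambda w-Aw=0$, and uniqueness of such solutions yields $u=v$.

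\textbf{Density argument.} For a general $f\in L^p(\R^N)\cap C_0(\R^N)$, I would approximate $f$ by a sequence $f_n\in C_c^\infty(\R^N)$ converging to $f$ both in $L^p$ and in sup-norm (via a smooth cutoff multiplied by a mollification). The previous step gives $(\lambda-A_p)^{-1}f_n=(\lambda-A)^{-1}f_n$ for every $n$; boundedness of $(\lambda-A_p)^{-1}$ on $L^p$ and of $(\lambda-A)^{-1}$ on $C_0$ propagate the equality to the limit, and continuity of both sides yields pointwise agreement.

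\textbf{Main obstacle.} The delicate point is the uniqueness of bounded classical solutions to $\lambda w-Aw=0$. I would exploit the Lyapunov function $\psi=1+|x|^\gamma$ from Section \ref{section2}, which satisfies $A\psi\le C\psi$: comparing $\pm w$ with $\eps\psi$ and sending $\eps\to 0$ yields the desired conclusion via a standard maximum principle argument, provided $\lambda_0$ is enlarged so that $\lambda_0>C$. An equivalent route, which avoids any further restriction on $\lambda_0$, is to represent $v$ through the Laplace transform $v=\int_0^\infty e^{-\lambda t}T(t)f\,dt$ of the $C_b$-semigroup whose uniqueness has already been established via the same Lyapunov function.
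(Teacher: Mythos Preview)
Your argument is correct and matches the paper's overall structure (positivity from the kernel representation, agreement on $C_c^\infty$, then density), but the middle step is handled differently. The paper fixes the exponent $p$, shows $u=(\lambda-A_p)^{-1}f\in W^{2,p}(\R^N)$ via Proposition~\ref{max-reg}, and then runs a Sobolev bootstrap (splitting into the cases $p\ge N/2$ and $p<N/2$, iterating along the sequence $r_n=1/p-2n/N$) to eventually land in $W^{2,q}$ with $q>N/2$ and hence in $C_0(\R^N)$. Your observation that the integral operator $L$ in \eqref{eq:def-L} is the same for every exponent and that $f\in C_c^\infty\subset L^q$ for all $q$ lets you invoke Theorem~\ref{invertibile} directly for a large $q$, bypassing the bootstrap entirely; this is cleaner and shorter, and it works because $\lambda_0$ is determined by the potential $\mathcal V$ and does not depend on $p$. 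Your discussion of the ``main obstacle'' is more cautious than necessary: uniqueness in $D_{max}(A)$ was already secured in Section~\ref{section2} via the Lyapunov function (indeed the resolvent $(\lambda-A)^{-1}$ on $C_0$ is defined through the semigroup whose uniqueness the Lyapunov function guarantees), so no further enlargement of $\lambda_0$ is needed.
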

\begin{proof}
The positivity of $(\lambda-A_p)^{-1}$ follows from Theorem \ref{invertibile} and the positivity of $L$.

For the second assertion take
$f \in C_c^\infty(\mathbb{R}^N)$ and set $u:=(\lambda-A_p)^{-1}f$. Since the coefficients of $A$ are H\"older continuous, by local elliptic regularity (cf. \cite[Theorem 9.19]{DG-NT}), we know
$u \in C^{2+\sigma}_{loc}(\mathbb{R}^N)$ for some
$0<\sigma <1$. On the other hand, $u\in W^{2,p}(\mathbb{R}^N)$ by Proposition \ref{max-reg}.\\
If $p\ge \frac{N}{2}$ then, by Sobolev's
inequality, $u\in L^q(\mathbb{R}^N)$ for all $q\in [p,+\infty)$. In particular, $u\in L^q(\mathbb{R}^N)$ for some $q> \frac{N}{2}$ (cf. \cite[Corollary 9.13]{Brezis}) and
hence $Au=-f+\lambda u\in L^q(\mathbb{R}^N)$. Moreover, since $u \in C^{2+\sigma}_{loc}(\mathbb{R}^N)$ it follows that $u\in
W^{2,q}_{loc}(\mathbb{R}^N)$. So, $u\in D_{q,max}(A)\subset W^{2,q}(\mathbb{R}^N)\subset C_0(\mathbb{R}^N)$  by Proposition \ref{max-reg} and Sobolev's embedding theorem (cf. \cite[Corollary 9.13]{Brezis}).\\
 Let us now suppose that $p<\frac{N}{2}$. Take the sequence $(r_n)$, defined by $r_n=1/p-2n/N$ and set
$q_n=1/r_n$ for $n\in\N$. Let $n_0$ be the smallest integer such that $r_{n_0}\le 2/N$ noting that $r_{n_0}>0$.
Then, $u\in D_{p,max}(A)\subset L^{q_1}(\mathbb{R}^N)\cap L^p(\mathbb{R}^N)$,
by the Sobolev embedding theorem. As above we obtain that $u\in D_{q_1,max}(A)\subset L^{q_2}(\mathbb{R}^N)$. Iterating this
argument, we deduce that $u\in D_{q_{n_0},max}(A)$. So we can conclude that $u\in C_0(\mathbb{R}^N)$ arguing as in the previous
case. Thus,
$Au=-f+\lambda u\in C_b(\mathbb{R}^N)$.
Again, since $u \in C^{2+\sigma}_{loc}(\mathbb{R}^N)$, it follows that $u\in W^{2,q}_{loc}(\mathbb{R}^N)$ for any $q\in (1,+\infty)$.
Hence, $u\in D_{max}(A)$. So, by the uniqueness of the solution of the elliptic problem, we have $(\lambda
-A_p)^{-1}f=(\lambda -A)^{-1}f$
for every $f\in C_c^\infty(\mathbb{R}^N)$. Thus the assertion follows by density.

 \end{proof}

\section{Characterization of the domain and generation of semigroups}\label{section4}
The aim of this section is to prove that the operator $A_p$ generates an analytic semigroup on $L^p(\mathbb{R}^N)$, for any  $p \in (1,\infty)$, provided that
$N>2,\,\alpha > 2$ and $\beta>\alpha-2$.\\
We characterize first the domain of the operator $A_p$. More precisely we prove that the maximal domain $D_{p,max}(A)$ coincides with the weighted  Sobolev space $ D_p(A)$ defined by
\[
  D_p(A):=\{ u\in W^{2,p}(\mathbb{R}^N)\;:\; Vu,\ (1+|x|^{\alpha-1})\nabla u,\ (1+|x|^{\alpha}) D^2u\in L^p(\mathbb{R}^N)\}.
\]
%

In the following lemma we give a complete proof of the weighted gradient and second derivative estimates.

\begin{lemma}\label{lm:stima-gradiente}
Suppose that $N>2,\,\alpha >2$ and $\beta >\alpha -2$.
Then there exists a constant $C>0$ such that for every $u\in D_{p}(A)$ we have
\begin{equation}\label{eq:gradient-estimate}
\|(1+|x|^{\alpha-1})\nabla u\|_p\leq C (\|A_pu\|_p + \Vert u\Vert_p)\;,
\end{equation}
\begin{equation}\label{eq:second-derivative-estimate}
\|(1+|x|^{\alpha})D^2 u\|_p\leq C( \|A_pu\|_p + \Vert u\Vert_p)\;.
\end{equation}
\end{lemma}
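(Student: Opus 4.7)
The plan is to prove the two weighted estimates simultaneously by a localization and rescaling argument, combining the covering Proposition~\ref{pr:covering}, the classical interior $L^p$ estimate for uniformly elliptic operators with bounded coefficients, and the a priori bound \eqref{eq:stima-apriori} of Theorem~\ref{invertibile}. I would choose the Lipschitz weight $\rho(x):=\eta(1+|x|)$ with $\eta>0$ small enough (in particular $\eta<1/2$) and apply Proposition~\ref{pr:covering} to extract a countable subcovering $\{B(x_n,\rho_n)\}_{n\in\N}$ of $\R^N$, with $\rho_n=\rho(x_n)$, whose doubled balls $B(x_n,2\rho_n)$ overlap at most $\zeta=\zeta(N)$ times. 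The key feature of this scale is that on each $B(x_n,2\rho_n)$ the coefficients $q(x)=1+|x|^{\alpha}$, $|F(x)|\lesssim |x|^{\alpha-1}$ and $V(x)=c|x|^{\beta}$ are essentially constant, being comparable to their values at $x_n$.

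Next, for each $n$, I would rescale by setting $v_n(z):=u(x_n+\rho_n z)$ on $B(0,2)$. Rewriting the equation as $q\Delta u+F\cdot\nabla u=Au+Vu$ (so that the potential contribution is treated as a forcing term), a direct computation shows that $v_n$ satisfies
$$\tilde a_n(z)\Delta v_n+\tilde b_n(z)\cdot\nabla v_n=g_n(z)\quad\text{on }B(0,2),$$
where $\tilde a_n(z)=q(x_n+\rho_n z)/q_n$, $\tilde b_n(z)=\rho_n F(x_n+\rho_n z)/q_n$, $g_n(z)=\rho_n^{2}(Au+Vu)(x_n+\rho_n z)/q_n$, and $q_n:=q(x_n)$. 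The choice $\rho_n=\eta(1+|x_n|)$ is tailored to make $\tilde a_n$ uniformly elliptic and $|\tilde b_n|\le C\eta$ uniformly in $n$, so the classical interior $L^p$ estimate (e.g.\ \cite[Theorem 9.11]{DG-NT}) yields
$$\|\nabla v_n\|_{L^p(B(0,1))}+\|D^2 v_n\|_{L^p(B(0,1))}\le C\bigl(\|g_n\|_{L^p(B(0,2))}+\|v_n\|_{L^p(B(0,2))}\bigr)$$
with $C$ independent of $n$. Translating back to $u$, multiplying by $q_n/\rho_n^{2}$ and using $q_n/\rho_n\asymp 1+|x_n|^{\alpha-1}$ together with $q_n/\rho_n^{2}\asymp 1+|x_n|^{\alpha-2}$ delivers, for each $n$,
$$(1+|x_n|^{\alpha-1})\|\nabla u\|_{L^p(B(x_n,\rho_n))}+q_n\|D^2 u\|_{L^p(B(x_n,\rho_n))}\le C\bigl(\|Au\|_{L^p(B(x_n,2\rho_n))}+\|Vu\|_{L^p(B(x_n,2\rho_n))}+(1+|x_n|^{\alpha-2})\|u\|_{L^p(B(x_n,2\rho_n))}\bigr).$$
Raising to the $p$-th power, summing over $n$, and exploiting the finite overlap of the doubled balls together with $(1+|x|)\asymp(1+|x_n|)$ on $B(x_n,2\rho_n)$, one arrives at
$$\|(1+|x|^{\alpha-1})\nabla u\|_p+\|(1+|x|^{\alpha})D^2u\|_p\le C\bigl(\|Au\|_p+\|Vu\|_p+\|(1+|x|^{\alpha-2})u\|_p\bigr).$$

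To conclude, since $\beta>\alpha-2\ge 0$, both exponents $\gamma=\beta$ and $\gamma=\alpha-2$ lie in $[0,\beta]$, and the a priori estimate \eqref{eq:stima-apriori} (at $\lambda=\lambda_0$) yields $\|Vu\|_p+\|(1+|x|^{\alpha-2})u\|_p\le C(\|Au\|_p+\|u\|_p)$ for every $u\in D_{p,max}(A)=D_p(A)$ (cf.\ Proposition~\ref{max-reg}), whence \eqref{eq:gradient-estimate} and \eqref{eq:second-derivative-estimate} follow. The main obstacle is pinning down the right scale $\rho$: a naive finer scale like $\rho(x)\asymp(1+|x|)^{-\alpha/2}$, which would turn the rescaled principal part into a genuine perturbation of $\Delta$, inflates the zero-th order factor $q_n/\rho_n^{2}$ up to $(1+|x_n|)^{2\alpha}$ and the summation breaks down; the linear scale $\rho_n=\eta(1+|x_n|)$ is precisely what aligns the scaling with the target weights $(1+|x|^{\alpha-1})$ and $(1+|x|^{\alpha})$ on the left, and with the weights $|x|^{\beta}$ and $|x|^{\alpha-2}$ controlled by \eqref{eq:stima-apriori} on the right. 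A secondary technical subtlety is that the rescaled potential $\rho_n^{2}V/q_n$ grows like $(1+|x_n|)^{\beta-\alpha+2}$ and therefore cannot be absorbed as a bounded coefficient when $\beta>\alpha$; keeping it on the right hand side is exactly what makes the a priori bound \eqref{eq:stima-apriori} indispensable.
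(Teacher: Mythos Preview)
Your proof is correct and shares the paper's overall architecture---localize at the linear scale $\rho(x)\asymp 1+|x|$, use the finite-overlap covering of Proposition~\ref{pr:covering}, and close with the a~priori estimate~\eqref{eq:stima-apriori}---but the local step is genuinely different. The paper fixes $x_0$, multiplies by a cutoff $\vartheta_\rho$ supported in $B(x_0,2\rho)$, and then treats gradient and Hessian separately: the interpolation inequality $\|\nabla v\|_p\le C\|v\|_p^{1/2}\|\Delta v\|_p^{1/2}$ (plus an $\varepsilon$-absorption after summing) for~\eqref{eq:gradient-estimate}, and the scale-invariant Calder\'on--Zygmund inequality for~\eqref{eq:second-derivative-estimate}. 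You instead rescale each ball to unit size and invoke a single interior $W^{2,p}$ estimate \cite[Theorem~9.11]{DG-NT}, which delivers both weighted bounds at once and avoids the $\varepsilon$-absorption altogether. Your route is slightly slicker but requires checking that the rescaled coefficients $\tilde a_n,\tilde b_n$ have bounds \emph{and modulus of continuity} uniform in $n$ (this holds: $|\nabla\tilde a_n|\lesssim\eta$ on $B(0,2)$), whereas the paper's cutoff argument only needs the raw $L^\infty$ bounds.

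One minor slip: in the last paragraph you write ``$u\in D_{p,max}(A)=D_p(A)$ (cf.\ Proposition~\ref{max-reg})''. Proposition~\ref{max-reg} only identifies $D_{p,max}(A)$ with $\{u\in W^{2,p}:Au\in L^p\}$; the full equality $D_{p,max}(A)=D_p(A)$ is Proposition~\ref{th:domain}, which is proved \emph{using} the present lemma, so citing it here would be circular. Fortunately you do not need it: the lemma is stated for $u\in D_p(A)$, and the trivial inclusion $D_p(A)\subset D_{p,max}(A)$ already makes~\eqref{eq:stima-apriori} available. Simply drop the equality and the reference.
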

\begin{proof}
Let $u\in D_{p}(A)$.
We fix $x_0\in \mathbb{R}^N$ and choose $\vartheta\in C_c^{\infty }(\mathbb{R}^N)$ such that $0\leq \vartheta \leq 1$, $\vartheta(x)=1$
for $x\in B(1)$ and $\vartheta(x)=0$ for $x\in \mathbb{R}^N \setminus B(2)$. Moreover, we set $\vartheta_\rho(x)=\vartheta
\left(\frac{x-x_0}{\rho}\right)$, where $\rho=\frac{1}{4}(1+|x_0|)$.
We apply the well-known interpolation inequality (cf. \cite[Theorem 7.27]{DG-NT})
\begin{equation}\label{interpolation}
\|\nabla v\|_{L^p(B(R))}\leq C\|v\|^{1/2}_{L^p(B(R))}
\|\Delta v\|^{1/2}_{L^p(B(R))},\;\;v\in W^{2,p}(B(R))\cap W^{1,p}_0 (B(R)),\;R>0,
\end{equation}
to the function $\vartheta_\rho u$ and obtain for every $\varepsilon>0$,
\begin{align*}
&\|(1+|x_0|)^{\alpha -1}\nabla u\|_{L^p(B(x_0,\rho))} \leq \|(1+|x_0|)^{\alpha -1}\nabla (\vartheta _\rho
u)\|_{L^p(B(x_0,2\rho))}\\
&\quad    \leq C \|(1+|x_0|)^{\alpha} \Delta (\vartheta _\rho u)\|^{\frac{1}{2}}_{L^p(B(x_0,2\rho))}
      \|(1+|x_0|)^{\alpha -2}\vartheta _\rho u\|^{\frac{1}{2}}_{L^p(B(x_0,2\rho))}\\
& \quad \leq C \left(
    \varepsilon \|(1+|x_0|)^{\alpha} \Delta (\vartheta _\rho u)\|_{L^p(B(x_0,2\rho))}+
  \frac{1}{4\varepsilon}\|(1+|x_0|)^{\alpha -2}\vartheta _\rho u\|_{L^p(B(x_0,2\rho))}
 \right)\\
&\quad \leq C \Big(
    \varepsilon \|(1+|x_0|)^{\alpha} \Delta u\|_{L^p(B(x_0,2\rho))}
     +\frac{2M}{\rho}\varepsilon \|(1+|x_0|)^{\alpha}\nabla u\|_{L^p(B(x_0,2\rho))} \\
&\qquad
    +\frac{\varepsilon M}{\rho^2}\| (1+|x_0|)^{\alpha}u\|_{L^p(B(x_0,2\rho))}
	  +\frac{1}{4\varepsilon}\|(1+|x_0|)^{\alpha -2} u\|_{L^p(B(x_0,2\rho))}
 \Big)\\
&\quad \leq C \Big(
    \varepsilon \|(1+|x_0|)^{\alpha} \Delta u\|_{L^p(B(x_0,2\rho))}
     +8M \varepsilon \|(1+|x_0|)^{\alpha-1}\nabla u\|_{L^p(B(x_0,2\rho))}\\
&\qquad +\left( 16\varepsilon M+\frac{1}{4\varepsilon}\right) \|(1+|x_0|)^{\alpha -2} u\|_{L^p(B(x_0,2\rho))}
      \Big)\\
&\quad \leq C(M) \Big(
    \varepsilon \|(1+|x_0|)^{\alpha} \Delta u\|_{L^p(B(x_0,2\rho))}
     + \varepsilon \|(1+|x_0|)^{\alpha-1}\nabla u\|_{L^p(B(x_0,2\rho))}\\
&\qquad + \frac{1}{\varepsilon} \|(1+|x_0|)^{\alpha -2} u\|_{L^p(B(x_0,2\rho))}
      \Big),
\end{align*}
where $M=\|\nabla \vartheta \|_{\infty }+\|\Delta \vartheta\|_{\infty }$.
Since  $2\rho=\frac{1}{2}(1+|x_0|)$ we get
\[
\frac{1}{2}(1+|x_0|)\leq 1+|x|\leq \frac{3}{2}(1+|x_0|),\qquad x\in B(x_0,2\rho).
\]
Thus,
\begin{align}\label{eq:cover-x0}
&\|(1+|x|)^{\alpha -1}\nabla u\|_{L^p(B(x_0,\rho))}\leq \left(\frac 32 \right)^{\alpha -1}\|(1+|x_0|)^{\alpha -1}\nabla
u\|_{L^p(B(x_0,\rho))} \nonumber \\
& \quad \leq C\Big(
    \varepsilon \|(1+|x_0|)^{\alpha} \Delta u\|_{L^p(B(x_0,2\rho))}
     + \varepsilon \|(1+|x_0|)^{\alpha-1}\nabla u\|_{L^p(B(x_0,2\rho))}\nonumber \\
&\qquad +\frac{1}{\varepsilon} \|(1+|x_0|)^{\alpha -2}u\|_{L^p(B(x_0,2\rho))}
      \Big)\;\nonumber \\
& \quad \leq C \Big(
    2^\alpha \varepsilon \|(1+|x|)^{\alpha} \Delta u\|_{L^p(B(x_0,2\rho))}
     +2^{\alpha-1} \varepsilon \|(1+|x|)^{\alpha-1}\nabla u\|_{L^p(B(x_0,2\rho))}\nonumber \\
&\qquad +\frac{2^{\alpha-2}}{\varepsilon} \|(1+|x|)^{\alpha -2}u\|_{L^p(B(x_0,2\rho))}
      \Big).
\end{align}
Let $\{B(x_n,\rho(x_n ))\}$ be a countable covering of $\mathbb{R}^N$ as in Proposition \ref{pr:covering}
such that at most $\zeta $ among the double balls $\{B(x_n,2\rho(x_n ))\}$ overlap.

We write \eqref{eq:cover-x0} with $x_0$ replaced by $x_n$ and sum over $n$, we obtain
\begin{eqnarray*}
\|(1+|x|)^{\alpha -1}\nabla u\|_{p}& \leq & \ C\zeta
    \big( \varepsilon \|(1+|x|)^{\alpha} \Delta u\|_{p}
     +\varepsilon \|(1+|x|)^{\alpha-1}\nabla u\|_{p}+\frac{1}{\varepsilon} \|(1+|x|)^{\alpha -2}u\|_{p}
     \big)\;.\nonumber \\
     &\le & C\varepsilon\Vert A_p u\Vert_p + C\varepsilon(1+\vert b\vert)\|(1+|x|)^{\alpha -1}\nabla u\|_{p} + C(\frac{1}{\varepsilon} + \varepsilon)\Vert (1+\vert x\vert^{\beta})u\Vert_p.
\end{eqnarray*}
Choosing $\varepsilon$ such that $\varepsilon C\zeta<\frac{1}{2(1+\vert b\vert)}$ we have
$$
\|(1+|x|)^{\alpha -1}\nabla u\|_{p} \leq C\left( \|A_p u\|_{p}+\Vert (1+\vert x\vert^{\beta})u\Vert_p\right)
$$
for some constant $C>0$.
Furthermore, by \eqref{eq:stima-apriori}, we know that $ \|(1+|x|^\beta)u\|_p \leq C(\|A_pu\|_p + \Vert u\Vert_p)$
for every $u\in D_p(A)\subset D_{p,max}(A)$ and some $C>0$. Hence,
\begin{equation*}
\|(1+|x|)^{\alpha -1}\nabla u\|_{p} \leq  C(\|A_pu\|_p+\|u\|_p).
\end{equation*}
As regards the second order derivatives we recall
the classical Calder\'{o}n-Zygmund inequality on $B(1)$
$$
\|D^2v \|_{L^p(B(1))} \le C\|\Delta v \|_{L^p(B(1))}
,\;\;v\in W^{2,p}(B(1))\cap W^{1,p}_0 (B(1)).
$$
By rescaling and translating we obtain
\begin{equation}\label{eq:cal-zig}
\|D^2v \|_{L^p(B(x_0,R))} \le C\|\Delta v \|_{L^p(B(x_0,R))}
\end{equation}
for every $x_0\in \mathbb{R}^N$, $R>0$ and
$v\in W^{2,p}(B(x_0,R))\cap W^{1,p}_0 (B(x_0,R))$.
We observe that the constant $C$ does not depend on $R$ and $x_0$.
\\
Then we fix $x_0\in \mathbb{R}^N$ and choose $\rho $ and $\vartheta_{\rho}\in C_c^{\infty }(\mathbb{R}^N)$ as above.
Applying \eqref{eq:cal-zig} to the function $\vartheta_{\rho} u$ in $B(x_0,2\rho)$, we obtain
\begin{align*}
&\|(1+|x_0|)^{\alpha}D^2 u\|_{L^p(B(x_0,\rho))}
  \leq \|(1+|x_0|)^{\alpha }D^2 (\vartheta _\rho u)\|_{L^p(B(x_0,2\rho))}\\
&\quad    \leq C \|(1+|x_0|)^{\alpha} \Delta (\vartheta _\rho u)\|_{L^p(B(x_0,2\rho))}.
\end{align*}
Arguing as above we obtain
\begin{align*}
&\|(1+|x|)^{\alpha }D^2 u\|_p
      \leq C \left(
    \|(1+|x|)^{\alpha} \Delta u\|_p
     +  \|(1+|x|)^{\alpha-1}\nabla u\|_p
     +\|(1+|x|)^{\alpha -2} u\|_p
      \right).
\end{align*}
The lemma follows from \eqref{eq:stima-apriori} and \eqref{eq:gradient-estimate}.
\end{proof}

The following result shows that $C_{c}^{\infty}(\mathbb{R}^N)$ is a core for $A_p$, since by Lemma \ref{lm:stima-gradiente} the norm \eqref{norm*}
is equivalent to the graph norm of $A_p$. The proof is based on Theorem \ref{invertibile} and Lemma \ref{lm:stima-gradiente} and it is similar to the one given in \cite[Lemma 4.3]{AC-AR-CT}.
\begin{lemma}\label{density1}
The space $C_{c}^{\infty}(\mathbb{R}^N)$ is dense in
\begin{equation}
D_{p}(A) = \{ u\in  W^{2,p}(\mathbb{R}^N), Vu, (1+\vert x\vert^{\alpha})D^{2}u, (1+\vert x\vert^{\alpha-1})\nabla u \in L^{p}(\mathbb{R}^{N})\}
\end{equation}
endowed with the norm
\begin{equation}\label{norm*}
\|u\|_{D_p(A)}:=\|u\|_p+\|Vu\|_p+\|(1+|x|^{\alpha -1})|\nabla u|\|_p+ \|(1+|x|^{\alpha})|D^2u|\|_p,\quad u\in D_p(A).
\end{equation}
\end{lemma}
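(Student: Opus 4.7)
The plan is a two-step approximation of each $u\in D_p(A)$: first truncate by a smooth cutoff to reduce to compactly supported $W^{2,p}$ functions, then mollify. Although by Lemma \ref{lm:stima-gradiente} combined with Theorem \ref{invertibile} the norm $\|\cdot\|_{D_p(A)}$ is equivalent to the graph norm $\|\cdot\|_p+\|A_p\cdot\|_p$ on $D_p(A)$, it is more convenient to work directly with the four weighted $L^p$ pieces and show convergence in each of them separately.

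For the truncation step I pick $\varphi\in C_c^\infty(\R^N)$ with $0\le\varphi\le 1$, $\varphi\equiv 1$ on $B(1)$ and supported in $B(2)$, and set $\varphi_n(x):=\varphi(x/n)$, $u_n:=\varphi_n u$. The unweighted terms $\|u-u_n\|_p$ and $\|V(u-u_n)\|_p$ vanish as $n\to\infty$ by dominated convergence, since $u$ and $Vu$ belong to $L^p(\R^N)$. For the derivative terms, Leibniz's rule applied to $u-u_n=(1-\varphi_n)u$ produces the contributions $(1-\varphi_n)\nabla u$, $(\nabla\varphi_n) u$, $(1-\varphi_n)D^2u$, $(\nabla\varphi_n)\otimes\nabla u$ and $(D^2\varphi_n) u$. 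The factors $(1-\varphi_n)\nabla u$ and $(1-\varphi_n)D^2u$ are dispatched by dominated convergence against $(1+|x|^{\alpha-1})|\nabla u|\in L^p$ and $(1+|x|^\alpha)|D^2u|\in L^p$. The cutoff-derivative contributions are supported in $\{n\le|x|\le 2n\}$, on which $|\nabla\varphi_n|\le C/n\le 2C/|x|$ and $|D^2\varphi_n|\le C/n^2\le 4C/|x|^2$; hence pointwise one gets bounds of the form
\begin{align*}
(1+|x|^{\alpha-1})|\nabla\varphi_n|\,|u| &\le C'(1+|x|^{\alpha-2})|u|,\\
(1+|x|^{\alpha})|\nabla\varphi_n|\,|\nabla u| &\le C'(1+|x|^{\alpha-1})|\nabla u|,\\
(1+|x|^{\alpha})|D^2\varphi_n|\,|u| &\le C'(1+|x|^{\alpha-2})|u|.
\end{align*}
The first and third of these require the central membership $(1+|x|^{\alpha-2})u\in L^p(\R^N)$, which follows from $u\in L^p(\R^N)$ together with the pointwise bound $|x|^{\alpha-2}\le c^{-1}|x|^\beta$ on $\{|x|\ge 1\}$, valid precisely because $\beta>\alpha-2$; thus $|x|^{\alpha-2}|u|\chi_{\{|x|\ge 1\}}\le c^{-1}|Vu|\in L^p(\R^N)$. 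Dominated convergence on $\{|x|\ge n\}$ then yields $u_n\to u$ in $D_p(A)$.

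For the mollification step each $u_n$ is now a $W^{2,p}$ function compactly supported in $B(2n)$. A standard mollifier $\rho_\epsilon$ yields $u_n\ast\rho_\epsilon\in C_c^\infty(\R^N)$ with $u_n\ast\rho_\epsilon\to u_n$ in $W^{2,p}(\R^N)$ as $\epsilon\to 0$. Since the weights $V$, $(1+|x|^{\alpha-1})$ and $(1+|x|^\alpha)$ are bounded on the fixed compact set $B(2n+1)$ that contains all supports for $\epsilon\le 1$, this $W^{2,p}$ convergence upgrades to $\|\cdot\|_{D_p(A)}$-convergence. A diagonal extraction combining the two steps produces the desired approximating sequence in $C_c^\infty(\R^N)$.

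The principal technical point is the control of the cutoff errors in the weighted norms: the cutoff derivatives $|\nabla\varphi_n|$ and $|D^2\varphi_n|$ decay only like $1/n$ and $1/n^2$, and this decay must offset the large weights $(1+|x|^{\alpha-1})$ and $(1+|x|^{\alpha})$ on the annulus where they live. The trade-off $1/n\le 2/|x|$ on $\{n\le|x|\le 2n\}$ is the key mechanism, and the hypothesis $\beta>\alpha-2$ is exactly what makes the residual weight $(1+|x|^{\alpha-2})u$ belong to $L^p(\R^N)$ via $Vu\in L^p(\R^N)$.
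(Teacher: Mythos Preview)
Your proof is correct and follows the standard two-step truncation-plus-mollification argument that the paper itself does not spell out but attributes to \cite[Lemma~4.3]{AC-AR-CT}. The only organizational difference is that the paper points to Theorem~\ref{invertibile} and Lemma~\ref{lm:stima-gradiente} (i.e.\ the graph-norm equivalence and the a priori bound $\||x|^{\gamma}u\|_p\le C\|\lambda u-A_pu\|_p$ for $0\le\gamma\le\beta$) as the source of the needed estimate $(1+|x|^{\alpha-2})u\in L^p$, whereas you obtain the same fact more directly from $Vu\in L^p$ and $\beta>\alpha-2$; the substance of the argument is the same.
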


Now, we are ready to show the main result of this section:
\begin{theorem}\label{generation}
Suppose that $N>2,\,\alpha>2$ and $\beta>\alpha-2$. Then the operator $A_p$ with domain $D_{p,max}(A)$ generates an analytic semigroup in $L^p(\mathbb{R}^N)$.
\end{theorem}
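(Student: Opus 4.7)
The plan is to deduce analyticity from the surjectivity established in Theorem~\ref{invertibile} together with a sectorial numerical-range estimate for $A_p$ after a scalar shift. First, I collect the structural ingredients. Combining Proposition~\ref{max-reg} with Lemma~\ref{lm:stima-gradiente} yields $D_{p,max}(A) = D_p(A)$ with equivalent graph and $D_p(A)$-norms; Lemma~\ref{density1} identifies $C_c^\infty(\mathbb{R}^N)$ as a core; and Theorem~\ref{invertibile} provides closedness, density, and the range condition $(\lambda_0 - A_p)(D_p(A)) = L^p(\mathbb{R}^N)$.

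The heart of the matter is the $p$-sectoriality estimate. For $u \in C_c^\infty(\mathbb{R}^N)$, after the standard regularization $(|u|^2+\varepsilon)^{1/2}$, write $u = |u|e^{i\theta}$ and pair $A_p u$ with $u^* := |u|^{p-2}\bar u$. Integration by parts against the coefficient $1+|x|^\alpha$ produces the auxiliary term $-\alpha|x|^{\alpha-2}x\cdot\nabla u$ that merges with the drift into an effective coefficient $b-\alpha$; a further integration by parts then yields
\begin{align*}
-\Re\langle A_p u, u^*\rangle
&= \int(1+|x|^\alpha)\bigl[(p-1)|u|^{p-2}|\nabla|u||^2 + |u|^p|\nabla\theta|^2\bigr]\,dx \\
&\quad + \frac{(b-\alpha)(N+\alpha-2)}{p}\int|x|^{\alpha-2}|u|^p\,dx + \int V|u|^p\,dx,\\
\Im\langle A_p u, u^*\rangle
&= -(p-2)\int(1+|x|^\alpha)|u|^{p-1}\nabla|u|\cdot\nabla\theta\,dx \\
&\quad + (b-\alpha)\int|x|^{\alpha-2}x\cdot|u|^p\nabla\theta\,dx.
\end{align*}
Young's inequality controls the first integral in $\Im$ by $\frac{|p-2|}{2\sqrt{p-1}}$ times the two quadratic terms in the real part, while the second integral in $\Im$ is absorbed by the $|\nabla\theta|^2$-piece together with $\int V|u|^p\,dx$, using the estimate $|x|^{2\alpha-2}/(1+|x|^\alpha) \le C(1+|x|^\beta)$, valid since $\beta > \alpha-2$. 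When $b<\alpha$, the indefinite term in the real part is absorbed by $\omega\|u\|_p^p + \int V|u|^p\,dx$ after a scalar shift, again using $|x|^{\alpha-2} \le \eta|x|^\beta + C_\eta$.

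Combining the above yields, for suitable constants $\omega, k_p > 0$, the sector inequality
\[
|\Im\langle A_p u, u^*\rangle| \le k_p\bigl(-\Re\langle(A_p-\omega)u, u^*\rangle\bigr) \quad \text{for all } u \text{ in the core}.
\]
Together with the range condition from Theorem~\ref{invertibile} and the density of $D_p(A)$, this shows that $-(A_p-\omega)$ is $m$-accretive on $L^p(\mathbb{R}^N)$ with numerical range contained in a proper subsector of the left half-plane; the standard sectorial-generation criterion then implies that $A_p$ generates an analytic semigroup. The main technical obstacle will be the careful absorption of the indefinite $|x|^{\alpha-2}|u|^p$ contributions with $u$-independent constants: this is exactly where the hypothesis $\beta > \alpha-2$ enters decisively, since it allows the potential $V$ to dominate the drift-induced residual both at the origin (via $\omega$) and at infinity.
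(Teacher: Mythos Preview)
Your sectorial numerical-range computation is essentially the one the paper performs, and the absorption arguments exploiting $\beta>\alpha-2$ are correct. The problem lies in the opening paragraph: you assert that Proposition~\ref{max-reg} together with Lemma~\ref{lm:stima-gradiente} gives $D_{p,max}(A)=D_p(A)$. This is not so. Proposition~\ref{max-reg} only shows $D_{p,max}(A)\subset W^{2,p}(\R^N)$, and Lemma~\ref{lm:stima-gradiente} is stated (and proved) for $u\in D_p(A)$: the absorption step in its proof requires $\|(1+|x|)^{\alpha-1}\nabla u\|_p<\infty$ a priori, precisely the information missing for a generic $u\in D_{p,max}(A)$. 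In the paper the identity $D_{p,max}(A)=D_p(A)$ is Proposition~\ref{th:domain}, which comes \emph{after} Theorem~\ref{generation} and whose proof invokes the Dirichlet approximation from the proof of Theorem~\ref{generation} itself. Consequently the range condition you quote, $(\lambda_0-A_p)(D_p(A))=L^p(\R^N)$, is not available either: Theorem~\ref{invertibile} furnishes surjectivity only on $D_{p,max}(A)$. Without the domain identification you cannot extend the sectorial estimate from the core $C_c^\infty(\R^N)$ to all of $D_{p,max}(A)$, nor supply the surjectivity needed for Lumer--Phillips on $D_p(A)$; as written the argument is circular.

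The paper breaks this circle by carrying out the same integration-by-parts computation on balls $B(\rho)$, for the solution $u_\rho\in W^{2,p}(B(\rho))\cap W_0^{1,p}(B(\rho))$ of the Dirichlet problem $\lambda u-(A_p-\omega)u=f$. On a bounded domain with zero boundary data the computation is unproblematic and yields the $\rho$-uniform bound $\|u_\rho\|_{L^p(B(\rho))}\le C_\theta|\lambda|^{-1}\|f\|_p$ throughout a sector. Interior $W^{2,p}$-estimates and a weak-compactness/diagonal argument then produce a limit $u\in D_{p,max}(A)$ inheriting the same bound; combined with the invertibility from Theorem~\ref{invertibile} and a connectedness argument for the resolvent set, this gives the sectorial resolvent estimate directly on $D_{p,max}(A)$, without any appeal to the domain identification. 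Your core-based route would work once $D_{p,max}(A)=D_p(A)$ is known independently, but in the paper's logical order that fact is a corollary, not an input.
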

\begin{proof}
Let $f \in L^{p}(\mathbb{R}^N),\, \rho>0$. Consider the operator $\widehat{A_{p}} := A_{p}-\omega$, where $\omega$ is a constant which will be chosen later. It is known that the elliptic problem in $L^{p}(B(\rho))$
\begin{equation}\label{equation 4}
\begin{cases}
& \lambda u - \widehat{A_{p}} u = f \ \ in\ \ B(\rho),\\
& u = 0 \qquad  on\ \ \partial B(\rho)
\end{cases}
\end{equation}
admits a unique solution $u_{\rho}$ in $W^{2,p}(B(\rho))\cap W_{0}^{1,p}(B(\rho))$ for $\lambda>0$, (cf. \cite[Theorem 9.15]{DG-NT}).\\
Let us prove that $e^{\pm i\theta}\widehat{A_{p}}$ is dissipative in $B(\rho)$ for $0 \le \theta \le \theta_{\alpha}$ with suitable $\theta_{\alpha} \in (0,\frac{\pi}{2}]$. To this purpose observe that
$$
\widehat{A_{p}}u_{\rho}) = \text{div}((1+\vert x\vert^{\alpha})\nabla u_{\rho}) + (b-\alpha)\vert x \vert^{\alpha - 1}\frac{x}{\vert x \vert} \cdot \nabla u_{\rho} - \vert x\vert^{\beta} u_{\rho} - \omega u_{\rho}.
$$
Set $u^{*} = \bar{u}_{\rho}\vert u_{\rho}\vert^{p-2}$ and recall that $q(x) = 1 + \vert x \vert^{\alpha}$. Multiplying $\widehat{A_{p}}u_{\rho}$ by $u^{*}$ and integrating over $B(\rho)$, we obtain
\begin{align*}
& \int_{B(\rho)}\widehat{A_{p}}u_{\rho}u^{*} dx = - \int_{B(\rho)}q(x)\vert u_{\rho}\vert^{p-4}\vert Re(\bar{u}_{\rho}\nabla u_{\rho})\vert^{2}dx  - \int_{B(\rho)}q(x)\vert u_{\rho}\vert^{p-4}\vert Im(\bar{u}_{\rho}\nabla u_{\rho})\vert^{2}dx \\ &- \int_{B(\rho)}\bar{u}_{\rho}\vert u_{\rho}\vert^{p-2}\nabla q(x)\nabla u_{\rho}dx - (p-2)\int_{B(\rho)}q(x)\vert u_{\rho}\vert^{p-4}\bar{u}_{\rho}\nabla u_{\rho}\vert Re(\bar{u}_{\rho}\nabla u_{\rho})\vert^{2}dx \\
& + b\int_{B(\rho)}\bar{u}_{\rho} \vert u_{\rho}\vert^{p-2} \vert x \vert^{\alpha-1}\frac{x}{\vert x \vert}\nabla u_{\rho} dx - \int_{B(\rho)}\left(\vert x\vert^{\beta} + \omega \right)\vert u_{\rho}\vert^{p}dx.
\end{align*}
We note here that the integration by part in the singular case $1<p<2$ is allowed thanks to \cite{Me-Sp}.
By taking the real part of the left and the right hand side, we have
\begin{align*}
& Re\left( \int_{B(\rho)}\widehat{A_{p}}u_{\rho}u^{*} dx\right) \\
 = & -(p-1)\int_{B(\rho)}q(x)\vert u_{\rho}\vert^{p-4}\vert Re(\bar{u}_{\rho}\nabla u_{\rho})\vert^{2}dx  - \int_{B(\rho)}q(x)\vert u_{\rho}\vert^{p-4}\vert Im(\bar{u}_{\rho}\nabla u_{\rho})\vert^{2}dx\\ & -
 \int_{B(\rho)}\vert u_{\rho}\vert^{p-2}\nabla q(x)Re(\bar{u}_{\rho}\nabla u_{\rho})dx
  + b\int_{B(\rho)} \vert u_{\rho}\vert^{p-2} \vert x \vert^{\alpha-1}\frac{x}{\vert x \vert}Re(\bar{u}_{\rho}\nabla u_{\rho}) dx\\
 & - \int_{B(\rho)}\left(\vert x\vert^{\beta} + \omega \right)\vert u_{\rho}\vert^{p}dx. \\
 & = -(p-1)\int_{B(\rho)}q(x)|u_\rho|^{p-4}|Re(\bar{u}_\rho\nabla u_\rho)|^2dx -\int_{B(\rho)}q(x)|u_\rho|^{p-4}|Im(\bar{u}_\rho\nabla u_\rho)|^2dx\\
&+ \int_{B(\rho)} \left( \frac{ (\alpha-b)(N-2+\alpha) }{p}|x|^{\alpha-2}-|x|^\beta-\omega \right)|u_\rho|^pdx.
\end{align*}
Taking now the imaginary part of the left and the right hand side, we obtain
\begin{align*}
& Im\left( \int_{B(\rho)}\widehat{A_{p}}u_{\rho}u^{*} dx\right) \\
 = & -(p-2)\int_{B(\rho)}q(x)\vert u_{\rho}\vert^{p-4} Im(\bar{u}_{\rho}\nabla u_{\rho})Re(\bar{u}_{\rho}\nabla u_{\rho})dx \\ & -
 \int_{B(\rho)}\vert u_{\rho}\vert^{p-2}\nabla q(x)Im(\bar{u}_{\rho}\nabla u_{\rho})dx
  + b\int_{B(\rho)} \vert u_{\rho}\vert^{p-2} \vert x \vert^{\alpha-1}\frac{x}{\vert x \vert}Im(\bar{u}_{\rho}\nabla u_{\rho}) dx.\\
 \end{align*}
 We can choose $\omega >0$ such that
$$
 \frac{ (\alpha-b)(N-2+\alpha) }{p}|x|^{\alpha-2}-|x|^\beta-\omega \le -\frac{ \vert \alpha-b\vert (N-2+\alpha) }{p} \vert x\vert^{\alpha-2}.
$$
Furthermore,
\begin{eqnarray*}
-Re\left( \int_{B(\rho)}\widehat{A_{p}}u_\rho\, u^\star dx  \right)
&\geq &(p-1)\int_{B(\rho)}q(x)|u_\rho|^{p-4}|Re(\bar{u}_\rho\nabla u_\rho)|^2dx
    \\ & &
  +\int_{B(\rho)}q(x)|u_\rho|^{p-4}|Im(\bar{u}_\rho\nabla u_\rho)|^2dx
+\tilde c \int_{B(\rho)}|u_\rho|^{p}|x|^{\alpha-2}dx
\\ &=&
(p-1)B^2+C^2+\tilde c D^2,
\end{eqnarray*}
where $\tilde c = \frac{ \vert \alpha-b\vert (N-2+\alpha) }{p}$ is a positive constant.
\\
Moreover,
\begin{eqnarray*}
& & \left|Im \left(\int_{B(\rho)}\widehat{A_{p}} u_\rho\, u^\star dx\right)\right|\\
&\leq & |p-2|\left(\int_{B(\rho)}|u_\rho|^{p-4}q(x)|Re(\bar{u}_\rho\nabla u_\rho)|^2 dx\right)^\frac{1}{2}
    \left(\int_{B(\rho)}|u_\rho|^{p-4}q(x)|Im(\bar{u}_\rho\nabla u_\rho)|^2 dx\right)^\frac{1}{2}\\
& &\qquad
  +\vert \alpha-b\vert \left(\int_{B(\rho)}|u_\rho|^{p-4}|x|^{\alpha}|Im(\bar{u}_\rho\nabla u_\rho)|^2\;
  dx\right)^\frac{1}{2}\left(\int_{B(\rho)}|u_\rho|^{p}|x|^{\alpha-2}\; dx\right)^\frac{1}{2}\\
&=& |p-2|BC+\vert \alpha-b \vert CD.
\end{eqnarray*}
Setting
\begin{align*}
& B^{2} =  \int_{B(\rho)}q(x)\vert u_{\rho}\vert^{p-4}\vert Re(\bar{u}_{\rho}\nabla u_{\rho})\vert^{2} dx\\
& C^{2} =  \int_{B(\rho)}q(x)\vert u_{\rho}\vert^{p-4}\vert Im(\bar{u}_{\rho}\nabla u_{\rho})\vert^{2}dx \\
& D^{2} =  \int_{B(\rho)}  \vert x \vert^{\alpha-2}\vert u_{\rho} \vert^{p}dx.
\end{align*}
As a result of the above estimates, we conclude
$$
\left\vert Im\left( \int_{B(\rho)}\widehat{A_{p}}u_{\rho}u^{*} dx\right) \right\vert \le l_{\alpha}^{-1} \left[- Re\left( \int_{B(\rho)}\widehat{A_{p}}u_{\rho}u^{*}dx\right)\right].
$$
If $\tan\theta_{\alpha} = l_{\alpha}$, then $e^{\pm i\theta}\widehat{A_{p}}$ is dissipative in $B(\rho)$ for $0\le \theta \le \theta_{\alpha}$ . From \cite[Theorem I.3.9]{Pazy} it follows that the problem
(\ref{equation 4}) has a unique solution $u_\rho$ for every $\lambda \in
\Sigma_\theta ,\,0 \le \theta <\theta_\alpha$ where
$$
\Sigma_\theta =\{\lambda \in \mathbb{C}\setminus \{0\}: |Arg\,  \lambda| <
\pi/2+\theta\}.
$$
Moreover, there exists a constant $C_\theta$ which is independent of $\rho$, such that
\begin{equation} \label{stima}
\|u_\rho\|_{L^p (B(\rho))} \le
\frac{C_\theta}{|\lambda|}\|f\|_{L^p},\quad \lambda \in \Sigma_\theta .
\end{equation}
Let us now fix  $\lambda \in \Sigma_\theta$, with $0<\theta <\theta_\alpha$ and a radius $r>0$.
We apply the interior $L^p$ estimates
(cf. \cite[Theorem 9.11]{DG-NT})  to the functions $u_\rho$ with $\rho >r+1$. So, by
(\ref{stima}), we have
\begin{equation}\label{eq:stimaW2prho}
\|u_\rho\|_{W^{2,p}(B(r))}\leq C_1\left( \|\lambda
u_\rho-\widehat{A_{p}}u_\rho\|_{L^p(B(r+1))}+\|u_\rho\|_{L^p(B(r+1))}\right) \le C_2\|f\|_p.
\end{equation}
Using a weak compactness and a diagonal argument,
we can construct a sequence $(\rho_n) \to \infty$ such that the functions $(u_{\rho_n})$
converge weakly in $W^{2,p}_{loc}$ to a function $u$ which satisfies $\lambda u-\widehat{A_{p}}u=f$ and
\begin{equation}
\label{stima-2}
 \|u\|_{p} \le
\frac{C_\theta}{|\lambda|}\|f\|_{p}, \quad \lambda \in \Sigma_\theta .
\end{equation}
Moreover, $u \in D_{p,max}(A)$.
We have now only to show
that $\lambda-\widehat{A_{p}}$ is invertible on $D_{p,max}(A)$ for
$\lambda_0 <\lambda \in \Sigma_\theta$.
Consider the set
\[
E=\{r>0: \Sigma_\theta \cap C(r) \subset\rho (\widehat{A_{p}})\},
\]
where $C(r):=\{\lambda \in \C : |\lambda |<r\}$.
Since, by Theorem \ref{invertibile}, $\lambda_0$ is in the resolvent set of $\widehat{A_{p}}$, then
$R=\sup E>0$.
On the other hand, the norm of
the resolvent is bounded by
$C_\theta/|\lambda|$ in $C(R) \cap \Sigma_\theta$. Consequently  it cannot explode on the
boundary of $C(R)$. Then $R=\infty$ and this ends the proof of the theorem.
\end{proof}

Let us show that $D_{p,max}(A)$ and $D_p(A)$ coincide.
\begin{proposition}\label{th:domain}
Assume that $N>2,\,\alpha >2$ and $\beta >\alpha -2$. Then maximal domain $D_{p,max}(A)$  coincides with $D_p(A)$.
\end{proposition}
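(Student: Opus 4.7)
The inclusion $D_p(A) \subset D_{p,max}(A)$ is immediate: $u \in D_p(A)$ yields $u \in W^{2,p}(\R^N) \subset L^p(\R^N) \cap W^{2,p}_{\rm loc}(\R^N)$, and the three weighted integrability conditions guarantee that $Au = q\Delta u + b|x|^{\alpha-1}\frac{x}{|x|}\cdot \nabla u - Vu$ belongs to $L^p(\R^N)$.

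For the converse, fix $\lambda \geq \lambda_0$ and write each $u \in D_{p,max}(A)$ as $u = (\lambda - A_p)^{-1}f$ with $f := \lambda u - A_p u \in L^p(\R^N)$, which is possible by Theorem \ref{invertibile}. Approximate $f$ by $f_n \in C_c^\infty(\R^N)$, and set $u_n := (\lambda - A_p)^{-1}f_n = L f_n$ via the representation \eqref{eq:def-L}. For $f_n$ compactly supported, the Green function bound \eqref{eq:stima-G} with $k$ taken arbitrarily large yields rapid polynomial decay of $u_n$; combined with a rescaled interior $L^p$ estimate on balls $B(x_0,\rho(x_0))$ with $\rho(x)=\frac{1}{4}(1+|x|)$, using that $A u_n = \lambda u_n$ outside the support of $f_n$, this transfers to decay of $\nabla u_n$ and $D^2 u_n$. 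Hence $u_n \in D_p(A)$.

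With $u_n \in D_p(A)$ secured, Lemma \ref{lm:stima-gradiente} together with Theorem \ref{invertibile} supplies
\[
\|u_n\|_{D_p(A)} \leq C\bigl(\|A_p u_n\|_p + \|u_n\|_p\bigr) \leq C'\|f_n\|_p.
\]
Applied to differences, $(u_n)$ is Cauchy in the Banach space $(D_p(A),\|\cdot\|_{D_p(A)})$, which is complete as an intersection of $W^{2,p}(\R^N)$ with the appropriate weighted Lebesgue spaces. If $v$ denotes its limit, then $u_n \to v$ in $L^p$ (since the $L^p$-norm is dominated by the $D_p(A)$-norm), while also $u_n \to u$ in $L^p$ by continuity of the resolvent; hence $u = v \in D_p(A)$.

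The main obstacle is the intermediate claim that $u_n \in D_p(A)$ for $f_n \in C_c^\infty$. The rapid polynomial decay of $u_n$ itself is a clean consequence of \eqref{eq:stima-G} with $k$ chosen large (and of the fact that the integrand is supported where $|y|$ is bounded), but propagating this decay to $(1+|x|^{\alpha-1})\nabla u_n$ and $(1+|x|^{\alpha})D^2 u_n$ requires a careful rescaled interior-regularity argument that must accommodate the polynomial blow-up of the coefficients of $A$ at infinity.
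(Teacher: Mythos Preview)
Your overall strategy is sound but takes a different route from the paper's, and the gap you yourself flag is genuine and not merely cosmetic. The paper does not approximate $f$; instead, for $\tilde u\in D_{p,max}(A)$ with $f=\lambda\tilde u-A_p\tilde u$, it solves the Dirichlet problem $\lambda u_\rho-Au_\rho=f$ on the ball $B(\rho)$. The key advantage is that the (zero-extended) Dirichlet solution $u_\rho$ is compactly supported and in $W^{2,p}$, so membership in $D_p(A)$ is automatic---no decay argument is required. Lemma~\ref{lm:stima-gradiente} and \eqref{eq:stima-apriori} then give $\rho$-uniform $D_p(A)$-bounds; a weak compactness argument produces a limit $u\in D_p(A)$ solving $\lambda u-Au=f$; and invertibility of $\lambda-A_p$ on $D_{p,max}(A)$ (Theorem~\ref{invertibile}) forces $u=\tilde u$.

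By contrast, your approximants $u_n=Lf_n$ are global resolvent images and are not compactly supported, so establishing $u_n\in D_p(A)$ genuinely requires the decay-propagation step you call the main obstacle. That step can be completed---divide by $1+|x|^\alpha$, move the potential term $\tfrac{c|x|^\beta}{1+|x|^\alpha}u_n$ to the right-hand side, rescale on $B(x_0,\rho(x_0))$ so the remaining drift is bounded uniformly in $x_0$, and use the arbitrary polynomial decay of $u_n$ to absorb the $|x_0|^{\beta-\alpha}$ growth coming from the displaced potential---but as written your proof stops at a sketch. The paper's Dirichlet-on-balls construction is designed precisely to bypass this, trading your strong $D_p(A)$-convergence for a weak-compactness argument that needs only uniform bounds. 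Your route, once the gap is filled, does buy norm convergence of the approximants; the paper's route is shorter because membership of the approximants in $D_p(A)$ comes for free.
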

\begin{proof}
We have to prove only the inclusion $D_{p,max}(A)\subset D_{p}(A)$.

Let $\tilde u\in D_{p,max}(A)$ and set $f=\lambda\,\tilde u - A_p\tilde u$.
The operator $A$ in $B(\rho)$, $\rho>0$,  is an uniformly elliptic operator with bounded coefficients. Then  the Dirichlet problem
\begin{equation}\label{palla0}
\left\{
\begin{array}{ll}
\lambda\,u- Au=f&\text{ in }B(\rho)\\
u=0&\text{ on }\partial B(\rho)\;,
\end{array}
\right.
\end{equation}
admits a unique solution $u_\rho$ in $W^{2,p}(B(\rho))\cap W_0^{1,p}(B(\rho))$ (cf. \cite[Theorem 9.15]{DG-NT}). So, $\widetilde{u_\rho}$, the zero extension
of $u_\rho$ to the complement $B(\rho)^c$, belongs to $D_p(A)$. Thus,
by Lemma \ref{lm:stima-gradiente} and \eqref{eq:stima-apriori}, we have
\begin{align*}
&\|(1+|x|^{\alpha-2})\widetilde{u_\rho}\|_p+\|(1+|x|^{\alpha-1})\nabla \widetilde{u_\rho}\|_p\\
&\qquad +\|(1+|x|^{\alpha})D^2 \widetilde{u_\rho}\|_p+\|V\widetilde{u_\rho}\|_p \le C(\|A\widetilde{u_\rho}\|_{p} + \Vert \widetilde{u_{\rho}}\Vert_p)
\end{align*}
with $C$ independent of $\rho$.

We observe that $u_\rho$ is the solution of \eqref{equation 4} with $\lambda$ replaced with $\lambda-\omega$.
Then arguing as in the proof of Theorem \ref{generation},
by \eqref{stima} and \eqref{eq:stimaW2prho} for $\lambda>\omega$, we have
$\|u_\rho\|_{L^p (B(\rho))} \le \frac{C_1}{\lambda-\omega}\|f\|_{L^p}$
and
$\|u_\rho\|_{W^{2,p}(B(r))}\leq C_2\|f\|_{L^p}$ where $r<\rho-1 $ and $C_1,C_2$ are positive constants which do not depend on $\rho$.

Using a standard weak compactness argument we can construct a sequence
$\widetilde{u_{\rho_n}}$ which converges to a function $u$ in $W^{2,p}_{loc}(\R^N)\cap L^p(\R^N)$ such that $\lambda\,u -Au=f$.
Since the estimates above are independent of $\rho$, also $u\in D_p(A)$.
Then $\lambda \tilde u - A\tilde u=\lambda u- Au$ and since $D_p(A)\subset D_{p,max}(A)$ and $\lambda - A$ is invertible on $D_{p,\max}(A)$ by
Theorem \ref{invertibile}, we have $\tilde u=u$.
\end{proof}
\begin{proposition}
For any $f \in L^p(\mathbb{R}^N)$, $1<p<\infty$, and any $0<\nu<1$ and for all $t >0$, the function $T_p(t)f$ belongs to $C_{b}^{1+\nu}(\mathbb{R}^N)$. In particular, the semigroup $(T_p(t))_{t\ge 0}$ is ultracontractive.
\end{proposition}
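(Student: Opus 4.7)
The plan is to combine analyticity of $(T_p(t))_{t\ge 0}$ with the domain characterization $D_{p,max}(A)=D_p(A)\hookrightarrow W^{2,p}(\R^N)$ from Proposition \ref{th:domain}, the consistency of the resolvents (and hence of the semigroups) across different $L^q$-spaces coming from Theorem \ref{risolvente}, and Sobolev embeddings, in a standard bootstrap argument.

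First, by Theorem \ref{generation} the semigroup $(T_p(t))$ is analytic in $L^p(\R^N)$, so for every $t>0$ and every $f\in L^p(\R^N)$ one has $T_p(t)f\in\bigcap_{n\in\N}D(A_p^n)$; in particular $T_p(t)f\in D_p(A)\hookrightarrow W^{2,p}(\R^N)$. If already $2p>N$ then Sobolev's embedding gives $T_p(t)f\in C^{1+\mu}_b(\R^N)$ with $\mu=1-N/p$ and we are almost done; otherwise we iterate. Sobolev's theorem yields $T_p(t)f\in L^{p_1}(\R^N)$ with $p_1=Np/(N-2p)$ when $2p<N$, and any $p_1\in[p,\infty)$ when $2p=N$. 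Using consistency of the resolvents proved in Theorem \ref{risolvente} (and, via the Post--Widder or Hille--Yosida approximation formula, of the associated semigroups) together with the semigroup property
\[
T_p(t)f=T_{p_1}(t/2)\bigl(T_p(t/2)f\bigr),
\]
we obtain $T_p(t)f\in D_{p_1}(A)\hookrightarrow W^{2,p_1}(\R^N)$.

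Iterating this procedure a finite number of times we reach an exponent $q\in(p,\infty)$ with $q>N/(1-\nu)$; then $T_p(t)f\in W^{2,q}(\R^N)\hookrightarrow C^{1+\nu}_b(\R^N)$ by Sobolev's embedding, which proves the first assertion. Finally, since $T_p(t)$ maps $L^p(\R^N)$ into $C^{1+\nu}_b(\R^N)\subset L^\infty(\R^N)$ and $A_p$ generates a $C_0$-semigroup (so $T_p(t)$ is closed as an operator on $L^p$), the closed graph theorem yields boundedness of $T_p(t):L^p(\R^N)\to L^\infty(\R^N)$ for each $t>0$, which is exactly ultracontractivity.

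The only non-routine point is the justification of the semigroup consistency used to splice $T_p$ and $T_{p_1}$ at intermediate times $t/2$: Theorem \ref{risolvente} provides the needed identification of the resolvents on the dense subspace $L^p\cap L^{p_1}\cap C_0$, from which consistency of the semigroups on $L^p\cap L^{p_1}$ follows by standard approximation. Once this is in place, the rest of the argument is a direct Sobolev bootstrap.
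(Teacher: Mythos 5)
Your argument is essentially the paper's: the authors likewise combine analyticity of $T_p(\cdot)$, the consistency of the resolvents from Theorem \ref{risolvente} (hence coherence of the semigroups on the various $L^q$-spaces), and then refer to the Sobolev bootstrap of \cite[Proposition 2.6]{Luca - Abde}, which is exactly the iteration you spell out. One small correction: the embedding $W^{2,p}(\R^N)\hookrightarrow C^{1+\mu}_b(\R^N)$ with $\mu=1-N/p$ requires $p>N$, not merely $2p>N$ (for $N/2<p\le N$ one only gets a $C^{0,\mu}_b$ embedding), but this does not affect your proof, since your iteration correctly continues until the exponent exceeds $N/(1-\nu)>N$.
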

\begin{proof}
In Theorem \ref{generation} we have proved that $A_p$ generates an analytic semigroup $T_p(\cdot)$ on $L^p(\R^N)$ and in Theorem \ref{risolvente} we have obtained that for $f \in L^p(\mathbb{R}^N)\cap C_0(\mathbb{R}^N)$, $(\lambda -A_p)^{-1}f = (\lambda-A)^{-1}f$. Hence this shows the coherence of the resolvents on $L^p(\mathbb{R}^N)\cap L^q(\mathbb{R}^N)$ by using a density argument. This will yield immediately that the semigroups are coherent in different $L^p$-spaces. One can deduce the result by using the same arguments as in the proof of \cite[Proposition 2.6]{Luca - Abde}.
\end{proof}
To end this section we study the spectrum of $A_p$.
\begin{proposition}
Assume $N>2$, $\alpha>2$, $\beta>\alpha-2$. Then, for $p \in (1,\infty)$, the resolvent operator $R(\lambda,A_p)$ is compact in $L^p(\mathbb{R}^N)$ for all $\omega_0 <\lambda \in \rho(A_p)$, where $\omega_0$ is a suitable positive constant, and the spectrum of $A_p$ consists of a sequence of negative real eigenvalues which accumulates at $-\infty$. Moreover, $\sigma(A_p)$ is independent of $p$.
\end{proposition}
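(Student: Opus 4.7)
The plan is to combine four ingredients: compactness of the resolvent (yielding discreteness of the spectrum), analyticity of the semigroup (constraining the location of the spectrum), consistency of the resolvents in different $L^p$-spaces ($p$-independence), and a self-adjoint realization of $A$ in a weighted $L^2$-space (reality and negativity of the eigenvalues). First I would prove that $R(\lambda,A_p)$ is compact on $L^p(\R^N)$ for every $\lambda\in\rho(A_p)$. By Proposition \ref{th:domain} and Lemma \ref{lm:stima-gradiente}, combined with the a priori estimate \eqref{eq:stima-apriori}, $R(\lambda,A_p)$ maps $L^p(\R^N)$ continuously into $D_p(A)$, which is embedded in $W^{2,p}(\R^N)$ with the extra control $\|(1+|x|^\beta)u\|_p\leq C\|f\|_p$. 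Given a bounded sequence $(f_n)\subset L^p(\R^N)$ and $u_n:=R(\lambda,A_p)f_n$, Rellich--Kondrachov gives an $L^p(B(R))$-convergent subsequence for every $R>0$, while the tail estimate $\int_{|x|>R}|u_n|^p\,dx\le R^{-p\beta}\|(1+|x|^\beta)u_n\|_p^p\le CR^{-p\beta}$ provides uniform tightness. A Cantor diagonal extraction then produces an $L^p(\R^N)$-convergent subsequence.

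Since $A_p$ generates an analytic semigroup by Theorem \ref{generation}, $\sigma(A_p)$ lies in a sector $\{z:|\arg(z-\omega_0)|>\pi/2+\theta\}$ for some $\omega_0>0$ and $\theta>0$, and Riesz--Schauder theory together with compactness shows that $\sigma(A_p)$ consists of isolated eigenvalues of finite algebraic multiplicity accumulating only at infinity inside that sector. For $p$-independence, I would use Theorem \ref{risolvente}: the resolvents of $A_p$ and $A_q$ agree on the dense subset $L^p\cap L^q\cap C_0(\R^N)$. Combined with the ultracontractivity of $T_p(\cdot)$, any eigenfunction $u\in D_p(A)$ with $A_pu=\lambda u$ satisfies $u=e^{-\lambda t}T_p(t)u\in L^\infty(\R^N)$, and together with $\|(1+|x|^\beta)u\|_p<\infty$ this places $u$ in every $L^q(\R^N)$, so it is also an eigenfunction of $A_q$ with the same eigenvalue. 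By symmetry in $p,q$, $\sigma(A_p)=\sigma(A_q)$.

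For the reality and strict negativity of the eigenvalues, I would exploit the divergence-form identity
\[
Au=\frac{1}{\mu}\,\mathrm{div}\!\left(\mu\,q\,\nabla u\right)-Vu,\qquad \mu(x)=(1+|x|^\alpha)^{(b-\alpha)/\alpha},
\]
obtained by direct computation from $\nabla\mu/\mu=(b-\alpha)|x|^{\alpha-2}x/(1+|x|^\alpha)$. This shows that $A$ is symmetric on $C_c^\infty(\R^N)$ with respect to the inner product of $L^2(\R^N,\mu\,dx)$, and the associated nonnegative quadratic form $Q(u,v)=\int q\,\nabla u\cdot\nabla v\,\mu\,dx+\int Vuv\,\mu\,dx$ defines, via Friedrichs extension, a nonpositive self-adjoint operator $\widehat A$ on $L^2(\mu\,dx)$ with $\sigma(\widehat A)\subset(-\infty,0]$. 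The semigroups $e^{t\widehat A}$ and $T(\cdot)$ coincide on $C_c^\infty(\R^N)$ by uniqueness of the classical solution of \eqref{problem}, hence on all joint dense subspaces, so by $p$-independence $\sigma(A_p)\subset(-\infty,0]$. Finally, $0$ cannot be an eigenvalue: $\widehat Au=0$ would force $Q(u,u)=0$, hence $\nabla u\equiv 0$ and $V|u|^2\equiv 0$, and since $V=c|x|^\beta>0$ off the origin this gives $u\equiv 0$.

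The main obstacle I anticipate is the clean identification $\sigma(A_p)=\sigma(\widehat A)$: when $b\ne\alpha$ the weighted space $L^2(\R^N,\mu\,dx)$ is genuinely different from every $L^p(\R^N,dx)$, so one must check carefully that the two semigroups are consistent on a common dense core and that every eigenfunction of $A_p$ decays fast enough to also lie in $L^2(\mu\,dx)$. This should follow from the weighted estimate $\|(1+|x|^\beta)u\|_p<\infty$, the ultracontractivity of $T(\cdot)$, and standard local elliptic regularity.
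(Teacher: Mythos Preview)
The paper gives no self-contained argument here; it merely says the proof is ``similar to the one given in \cite{Luca - Abde}'', where the corresponding result is established for the driftless operator $(1+|x|^{\alpha})\Delta-|x|^{\beta}$. Your proposal reconstructs exactly that strategy --- compactness of the resolvent from the weighted a priori estimate \eqref{eq:stima-apriori} together with Rellich--Kondrachov and a tightness argument, discreteness from Riesz--Schauder and analyticity, $p$-independence from resolvent consistency (Theorem \ref{risolvente}) plus ultracontractivity, and reality/negativity via a symmetric realization in a weighted $L^{2}$-space --- and you correctly compute the symmetrizing density $\mu=(1+|x|^{\alpha})^{(b-\alpha)/\alpha}$ that the drift forces. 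So your outline is faithful to what the paper intends.

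The one place where your sketch is thinner than it should be is precisely the obstacle you flag at the end. In \cite{Luca - Abde} the weight is $(1+|x|^{\alpha})^{-1}$, which is bounded, so an eigenfunction $u\in L^{2}(\R^{N})$ automatically lies in $L^{2}(\mu\,dx)$ and the spectral identification with the self-adjoint operator is immediate. Here, when $b>\alpha$, the weight $\mu$ grows like $|x|^{b-\alpha}$, and the information you list --- $u\in L^{\infty}$ and $(1+|x|^{\beta})u\in L^{q}$ for every $q$ --- yields $u\in L^{2}(\mu\,dx)$ via H\"older only when $b-\alpha<2\beta$; for larger $b$ a single application of \eqref{eq:stima-apriori} is not enough. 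One needs to iterate: writing $u=(\lambda_{0}-\lambda)R(\lambda_{0},A_{p})u$ and re-entering the Green-function estimate \eqref{eq:stima-G} (which holds for every $k$) in the proof of Lemma \ref{chius2}, one obtains $(1+|x|)^{m}u\in L^{q}$ for arbitrarily large $m$, after which the inclusion $u\in L^{2}(\mu\,dx)$ and the identification $\sigma(A_{p})=\sigma(\widehat{A})$ follow. This bootstrap is routine once noticed, but it is a genuine extra step beyond the driftless case and should be made explicit.
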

\begin{proof}
The proof is similar to the one given in \cite{Luca - Abde}.
\end{proof}

$$ $$
\textbf{Acknowledgement: }
S-E. Boutiah wishes to thank the Dept. of Information Eng., Electrical Eng. and Applied Mathematics (D.I.E.M) of the University of Salerno for the warm hospitality and a very fruitful and pleasant stay in Salerno, where this paper has been written.

\end{document}